\newcommand{\Span}{{\operatorname{Span}}}
\newcommand{\Diff}{{\operatorname{Diff}}}
\DeclareMathOperator{\rank}{rank}
\DeclareMathOperator{\Id}{id}
\DeclareMathOperator{\Top}{top}
\newcommand{\U}{{\mathcal U}}
\renewcommand{\t}{{\mathbf t}}
\newcommand{\0}{{\mathbf 0}}
\newcommand{\C}{{\mathbb C}}
\newcommand{\PP}{{\mathbb P}}
\newcommand{\z}{{\mathbf z}}
\newcommand{\Z}{{\mathbb Z}}
\newcommand{\Q}{{\mathbb Q}}
\newcommand{\D}{{\mathbb D}}
\newcommand{\cO}{{\mathcal O}}
\newcommand{\hyp}{{\mathbb H}}
\newcommand{\supp}{\operatorname{supp}}
\newcommand{\im}{\mathop{\rm im}\nolimits}
\newcommand{\Adot}{\mathbf A^\bullet}
\newcommand{\Bdot}{\mathbf B^\bullet}
\newcommand{\Cdot}{\mathbf C^\bullet}
\newcommand{\Idot}{\mathbf I^\bullet}
\newcommand{\Fdot}{\mathbf F^\bullet}
\newcommand{\Pdot}{\mathbf P^\bullet}
\newcommand{\Ndot}{\mathbf  N^\bullet}
\newcommand{\pd}{\partial}
\newcommand{\wt}{\widetilde}
\newtheorem{defn0}{Definition}[subsection]
\newtheorem{prop0}[defn0]{Proposition}
\newtheorem{conj0}[defn0]{Conjecture}
\newtheorem{thm0}[defn0]{Theorem}
\newtheorem{lem0}[defn0]{Lemma}
\newtheorem{corollary0}[defn0]{Corollary}
\newtheorem{example0}[defn0]{Example}
\newtheorem{remark0}[defn0]{Remark}
\newtheorem{question0}[defn0]{Question}
\newenvironment{defin}{\begin{defn0}}{\end{defn0}}
\newenvironment{prop}{\begin{prop0}}{\end{prop0}}
\newenvironment{conj}{\begin{conj0}}{\end{conj0}}
\newenvironment{thm}{\begin{thm0}}{\end{thm0}}
\newenvironment{lem}{\begin{lem0}}{\end{lem0}}
\newenvironment{cor}{\begin{corollary0}}{\end{corollary0}}
\newenvironment{exm}{\begin{example0}\rm}{\end{example0}}
\newenvironment{rem}{\begin{remark0}\rm}{\end{remark0}}
\newcommand{\defref}[1]{Definition~\ref{#1}}
\newcommand{\propref}[1]{Proposition~\ref{#1}}
\newcommand{\thmref}[1]{Theorem~\ref{#1}}
\newcommand{\lemref}[1]{Lemma~\ref{#1}}
\newcommand{\corref}[1]{Corollary~\ref{#1}}
\newcommand{\exref}[1]{Example~\ref{#1}}
\newcommand{\secref}[1]{Section~\ref{#1}}
\newcommand{\remref}[1]{Remark~\ref{#1}}
\newcommand{\equref}[1]{Formula~\ref{#1}}
\newcommand{\mbf}[1]{{\mathbf #1}}
\begin{document}

\title{Deformation Formulas for Parameterized Hypersurfaces}

\author{Brian Hepler}

\maketitle

\begin{abstract}
 We investigate one-parameter deformations of functions on affine space which define parameterizable hypersurfaces. With the assumption of isolated polar activity at the origin, we are able to completely express the L\^{e} numbers of the special fiber in terms of the L\^{e} numbers of the generic fiber and the characteristic polar multiplicities of the comparison, a perverse sheaf naturally associated to any reduced complex analytic space on which the constant sheaf $\Q_X^\bullet[\dim X]$ is perverse. This generalizes the classical formula for the Milnor number of a plane curve in terms of double points as well as Mond's image Milnor number. We also recover results of Gaffney and Bobadilla using this framework. We obtain similar deformation formulas for maps from $\C^2$ to $\C^3$, and provide an ansatz for obtaining deformation formulas for all dimensions within Mather's nice dimensions.
 \end{abstract}

\section{Generalizing Milnor's Formula to Higher Dimensions}\label{sec:deformationformulas}

Suppose that $\U$ is an open neighborhood of the origin in $\C^2$. Let $f_0:(\U, \0)\rightarrow (\C, 0)$ be a complex analytic function which has an isolated critical point at the origin. Thus, $f_0$ defines a plane curve $V(f_0)$ in $\U$. Let $r$ be the number of irreducible components of $V(f_0)$ at the origin. Then, by a well-known result of Milnor (Theorem 10.5 of \cite{milnorsing}), the Milnor number $\mu_\0(f_0)$ is related to the number of double points $\delta$ which occur in a generic (stable) deformation of $f_0$ by 
\begin{equation}\label{eqn:milnor}
\mu_\0(f_0) = 2\delta -r +1. 
\end{equation}
We wish to generalize this formula, in light of recent work with the author and David Massey in \cite{hepmasparam} (Theorem 5.3), in which we obtain a quick proof of the above formula.  

 In re-proving Milnor's formula (\ref{eqn:milnor}) in \cite{hepmasparam}, one immediately notices that the generality of the methods used in \cite{hepmasparam} are not at all limited to deformations of curves in $\C^2$; consequently, it is natural to hope that a similar, more general result holds between the vanishing cycles and the perverse sheaf $\Ndot_{V(f)}$ (central to this current paper and \cite{hepmasparam}) in deformations of parameterized hypersurfaces. We prove such a generalization in this paper, and obtain a similar formula for deformations of parameterized surfaces in $\C^3$, and a ``bootstrap ansatz" for obtaining such results for deformations of parameterized hypersurfaces in $\C^{n+1}$ if one knows all of the stable maps from $\C^{n+1}$ to $\C^{n+2}$. This generalizes work of David Mond's image Milnor number \cite{mondbentwires}, similar deformation formulas of Massey and Dirk Siersma \cite{masseysiersma}, work of Terence Gaffney \cite{gaffneypairs}, \cite{l0equivalence}, in addition to Milnor's original formula. We also recover a result of Javier Fern\'{a}ndez de Bobadilla regarding a special case of L\^{e}'s Conjecture regarding the equisingularity of surfaces in $\C^3$ with smooth normalization \cite{bobleconj}.

\medskip

 The first question we ask is: \textbf{what if we didn't have such a ``stable" deformation of the curve $V(f_0)$?} That is, what if we didn't know that the origin $\0 \in V(f_0)$ splits into $\delta$ nodes? We can still use the techniques of Theorem 5.3 of \cite{hepmasparam} in this situation. In this case, if $\pi$ parameterizes the deformation of $V(f_0)$, we have
\begin{equation}\label{eqn:generalmilnor}
\mu_\0(f_0) = -m(\0) + \sum_{p \in B_\epsilon \cap V(t-t_0)} \left (\mu_p(f_{t_0}) + m(p) \right )
\end{equation}
where $m(p) := |\pi^{-1}(p)| - 1$; the above formula follows easily from the same proof as Theorem 5.3 of \cite{hepmasparam}. 


\medskip

Suppose now that $\pi_0 : (\wt {V(f_0)},S) \to (V(f_0),\0)$ is the normalization of a (reduced) hypersurface $V(f_0) \subseteq \C^n$, and $\pi$ is a one-parameter unfolding of $\pi_0$ (see \secref{sec:unfold}), so that, if $\D$ is a small open disk around the origin in $\C$,  
$$
\pi : (\D \times \wt {V(f_0)},\{0\} \times S) \to (V(f),\0),
$$
 for some complex analytic function $f \in \cO_{\C^{n+1},\0}$, where $\pi$ is of the form $\pi(t,\z) = (t,\pi_t(\z))$ and $\pi(0,\z) = \pi_0(\z)$.  Here, $S = \pi_0^{-1}(\0)$ is a finite subset of $\wt {V(f_0)}$, a purely $(n-1)$-dimensional $\Q$-homology (or smooth) manifold. We impose this last condition on the normalization of $V(f_0)$ because of the following result regarding the stalk cohomology of the perverse sheaf $\Ndot_X$, defined on any locally reduced, purely $n$-dimensional complex analytic space on which $\Q_X^\bullet[n]$ is perverse. 
 
 Recall that an $n$-dimensional complex analytic space $Y$ is a \textbf{rational homology manifold} (or, a $\Q$-homology manifold) if the natural morphism $\Q_Y^\bullet[n] \to \Idot_Y$ is a quasi-isomorphism \cite{BorhoMac}, where $\Idot_Y$ is the intersection cohomology complex with constant $\Q$-coefficients on $Y$.
 
\begin{thm}[Theorem 2.3 \cite{qhomcriterion}]\label{thm:qhomcriterion}
Let $X$ be a reduced, purely $n$-dimensional complex analytic space on which $\Q_X^\bullet[n]$ is perverse, and let $\pi : Y \to X$ be the normalization of $X$. Then, $Y$ is a $\Q$-homology manifold if and only if $\Ndot_X$ has stalk cohomology concentrated in degree $-n+1$, i.e., $H^k(\Ndot_X)_p = 0$ for all $k \neq -n+1$ and all $p \in X$. 
\end{thm}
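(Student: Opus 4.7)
The plan is to work directly with the distinguished triangle defining $\Ndot_X$ and carry out a long-exact-sequence computation on stalks, exploiting finiteness of the normalization $\pi : Y \to X$. Recall that $\Ndot_X$ arises as the fiber of a natural morphism $\Q_X^\bullet[n] \to \pi_* \Idot_Y$, obtained by composing the adjunction unit $\Q_X^\bullet[n] \to \pi_* \Q_Y^\bullet[n]$ with the canonical map $\pi_* \Q_Y^\bullet[n] \to \pi_* \Idot_Y$. Since $\pi$ is finite, $\pi_* \Idot_Y$ is perverse, and under the standing hypothesis on $X$, $\Ndot_X$ is perverse as well.

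First I would fix $p \in X$ and, using finiteness of $\pi$, identify $(\pi_* \Idot_Y)_p \cong \bigoplus_{q \in \pi^{-1}(p)} (\Idot_Y)_q$. Because $\Q_X^\bullet[n]$ has stalk cohomology concentrated in degree $-n$ with value $\Q$, and because $H^{-n}(\Idot_Y)_q = \Q$ at every $q$ (as $Y$ is normal, hence analytically irreducible at each point), the induced map on $H^{-n}$ stalks at $p$ is the diagonal $\Q \hookrightarrow \Q^{|\pi^{-1}(p)|}$. The long exact sequence associated to the defining triangle then yields
$$
H^{-n}(\Ndot_X)_p = 0, \quad H^{-n+1}(\Ndot_X)_p \cong \Q^{|\pi^{-1}(p)|-1}, \quad H^i(\Ndot_X)_p \cong \bigoplus_{q \in \pi^{-1}(p)} H^{i-1}(\Idot_Y)_q \text{ for } i \geq -n+2.
$$

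Consequently, $\Ndot_X$ has stalk cohomology concentrated in degree $-n+1$ at every point of $X$ if and only if $H^j(\Idot_Y)_q = 0$ for all $j > -n$ and all $q \in Y$. Combined with the automatic identity $H^{-n}(\Idot_Y)_q = \Q$ for normal $Y$, this is exactly the statement that $\Q_Y^\bullet[n] \to \Idot_Y$ is a quasi-isomorphism, i.e., $Y$ is a $\Q$-homology manifold in the sense recalled just before the theorem.

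The main obstacle is essentially bookkeeping: verifying perversity of $\Ndot_X$ via $t$-exactness of $\pi_*$ for finite maps together with the assumed perversity of $\Q_X^\bullet[n]$, and justifying the canonical morphism $\Q_X^\bullet[n] \to \pi_* \Idot_Y$ in a way that does not implicitly require $\Q_Y^\bullet[n]$ itself to be perverse. Once these setup points are in place, the theorem reduces cleanly to the long-exact-sequence computation above, with the equivalence of the two conditions appearing as a direct consequence of pushing IC stalks on $Y$ forward to stalks on $X$ through the finite map $\pi$.
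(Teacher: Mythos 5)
Your argument is correct and is essentially the proof of the cited result: the paper defines $\Ndot_X$ as the kernel of the perverse surjection $\Q_X^\bullet[n]\to\Idot_X$, and since the normalization is finite and generically one-to-one one has $\pi_*\Idot_Y\cong\Idot_X$, so your defining triangle coincides with the paper's short exact sequence. The long exact sequence on stalks, the diagonal $\Q\hookrightarrow\Q^{|\pi^{-1}(p)|}$ (injective because the normal space $Y$ is unibranch, so $H^{-n}(\Idot_Y)_q\cong\Q$), and the resulting isomorphisms $H^k(\Ndot_X)_p\cong\bigoplus_{q\in\pi^{-1}(p)}H^{k-1}(\Idot_Y)_q$ for $k\geq -n+2$ yield exactly the stated equivalence.
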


The perverse sheaf $\Ndot_X$ is defined in a very straight-forward manner: when $\Q_X^\bullet[n]$ is perverse, there is a natural surjection of perverse sheaves $\Q_X^\bullet[n] \to \Idot_X \to 0$, where $\Idot_X$ is the intersection cohomology complex on $X$ with constant $\Q$ coefficients. Since the category of perverse sheaves is Abelian, this morphism has a kernel, which we define to be $\Ndot_X$. This perverse sheaf, called the \textbf{comparison complex} on $X$, was first defined by the author and Massey in \cite{hepmasparam} (where we originally referred to it as the \textbf{multiple-point complex}), and subsequently studied by the author in \cite{qhomcriterion},\cite{ndotMHMhep} and Massey in \cite{comparison}. $\Ndot_X$ will play a crucial role in this paper as the cohomological generalization of the function $m(p) = |\pi^{-1}(p)|-1$ above.

\begin{rem}\label{rem:coefficients}
Throughout this paper, we will use $\Z$ coefficients when referring to hypersurfaces with smooth normalizations (where \thmref{thm:qhomcriterion} is trivially satisfied), and $\Q$ coefficients when referring to hypersurfaces with $\Q$-homology manifold normalizations. When necessary, we explicitly state which arguments much change (if at all) to change coefficients (see \remref{rem:ipacoefficients}, \remref{rem:lenumcoefficients}).
\end{rem}

 \bigskip


What would it mean to have a generalization of \equref{eqn:generalmilnor}? In the broadest sense, one would want to express numerical data about the singularities of $f_0$ completely in terms of data about the singularities of $f_{t_0}$, for $t_0$ small and non-zero. What changes when we move to higher dimensions?

\medskip

 One of the restrictions in considering parameterizable hypersurfaces $V(f)$ is that they must have codimension-one singularities. In particular, to get the most use out of the complex $\Ndot_{V(f)}$ on $V(f)$, we will assume the image multiple-point set $D = \supp \Ndot_{V(f)} \neq \emptyset$ and $D = \Sigma f$. For parameterized spaces, one always has the inclusion $D \subseteq \Sigma f$, but it is possible for this inclusion to be strict (e.g., if one parameterizes the cusp $y^2=x^3$ in $\C^2$, or more generally, if $V(f)$ itself is a $\Q$-homology manifold). Since $D$ is purely $(n-1)$-dimensional, we are stuck with hypersurfaces that have codimension-one singularities. 
 
 \medskip

 Consequently, we may no longer use the Milnor number in higher dimensions, since this number applies only to isolated singularities. One natural generalization of the Milnor number to higher-dimensional singularities are the \textbf{L\^{e} numbers} $\lambda_{f,\z}^i$, and we will express the L\^{e} numbers of the $t=0$ slice of in terms of the L\^{e} numbers of the $t\neq 0$ slice, together with the \textbf{characteristic polar multiplicities} of $\Ndot_{V(f)}$, which generalize the rank of the hypercohomology group $\hyp^{0}(D\cap F_{t_{|_{V(f)}},\0}; \Ndot_{V(f)})$ used in Theorem 5.1 and Theorem 5.3 of \cite{hepmasparam} (here, $F_{t_{|_{\Sigma f}},\0}$ denotes the Milnor fiber of $t_{|_{\Sigma f}}$ at $\0$, and $D$ denotes the image multiple-point set of $\pi$). This will be explored in Section~\ref{sec:unfold} and Section~\ref{sec:lenums}.

\medskip

When moving to higher dimensions, we must also consider which sort of deformation to allow when relating $f_0$ and $f_{t_0}$ for $t_0$ small and not zero. For this, we choose the notion of a deformation with isolated polar activity (or, an \textbf{IPA-deformation}). Intuitively, these are deformations where the only ``interesting" behavior happens at the origin, and the only change propagates outwards from the origin along curves.  Such deformations exist generically in all dimensions. We examine this notion, first introduced by Massey in \cite{ipadef}, in Section~\ref{sec:IPA} (although an equivalent notion appears as early as 1992 with Massey and Siersma \cite{masseysiersma} under the name \textbf{equi-transversal deformations}, although without the conormal perspective we use here). An ordered tuple of linear forms $\z = (z_0,\cdots,z_k)$ is called an IPA-tuple (for $f$ at $\0$) if, for $1 \leq i \leq k$, $f_{|_{V(z_0,\cdots,z_{i-1})}}$ is an IPA-deformation of $f_{|_{V(z_0,\cdots,z_i)}}$ at $\0$.

In Section~\ref{sec:surface}, we prove the following result. 

\begin{thm}[Theorem ~\ref{thm:main}]
Suppose that $\pi : (\D \times \wt {V(f_0)}, \{0\} \times S) \to (V(f),\0)$ is a one-parameter unfolding of a parameterized hypersurface $\im \pi_0 = V(f_0)$.  Suppose further that $\z = (z_1,\cdots,z_n)$ is chosen such that $\z$ is an IPA-tuple for $f_0 = f_{|_{V(t)}}$ at $\0$. Then, the following formulas hold for the L\^{e} numbers of $f_0$ with respect to $\z$ at $\0$: for $0 < |t_0|  \ll \epsilon \ll 1$,
\begin{align*}
\lambda_{f_0,\z}^0(\0) &= -\lambda_{\Ndot_{V(f_0)},\z}^0(\0) + \sum_{p \in B_\epsilon \cap V(t-t_0)} \left ( \lambda_{{f_{t_0}},\z}^0(p) + \lambda_{\Ndot_{V(f_{t_0})},\z}^0(p) \right ), 
\end{align*}
and, for $1 \leq i \leq n-2$,
\begin{align*}
\lambda_{f_0,\z}^i(\0) &= \sum_{q \in B_\epsilon \cap V(t-t_0,z_1,z_2,\cdots,z_i)} \lambda_{f_{t_0},\z}^{i}(q).
\end{align*}

In particular, the following relationship holds for $0 \leq i \leq n-2$:
\begin{align*}
\lambda_{f_0,\z}^i(\0) +\lambda_{\Ndot_{V(f_0)},\z}^i(\0) =   \sum_{p \in B_\epsilon \cap V(t-t_0,z_1,z_2,\cdots,z_i)} \left ( \lambda_{{f_{t_0}},\z}^i(p) + \lambda_{\Ndot_{V(f_{t_0})},\z}^i(p) \right ).
\end{align*}

\end{thm}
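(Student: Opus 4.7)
The plan is to express both the Lê numbers and the $\Ndot$ polar multiplicities as characteristic polar multiplicities of appropriate perverse sheaves, and then derive the formulas as conservation-of-number identities for characteristic cycles. For any function $g$ on a smooth variety $W$, one has $\lambda^i_{g, \z}(p)$ equal to the $i$-th characteristic polar multiplicity of the shifted vanishing cycle perverse sheaf $\phi_g^p \Q_W^\bullet[\dim W]$ at $p$, so every term on either side of the theorem is a characteristic polar multiplicity of some perverse sheaf. The main algebraic tool is the defining short exact sequence of $\Ndot_{V(f)}$ on $V(f)$: since $\pi$ is a one-parameter unfolding of $\pi_0$, the total space $\D \times \wt{V(f_0)}$ is both the normalization of $V(f)$ and a $\Q$-homology manifold, so \thmref{thm:qhomcriterion} applied in the family gives the short exact sequence of perverse sheaves
\begin{equation*}
0 \to \Ndot_{V(f)} \to \Q_{V(f)}^\bullet[n] \to \pi_*\Q_{\D\times \wt{V(f_0)}}^\bullet[n] \to 0.
\end{equation*}

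Following the template of Theorem~5.3 of \cite{hepmasparam}, I would first handle the base case $i = 0$. The idea is to restrict the above sequence to the Milnor fiber $F_{t, \0}$ of $t$ at $\0$, take alternating sums of hypercohomology ranks, and use that this Euler characteristic depends continuously on $t$. The left term $\Ndot_{V(f)}$ contributes the $\Ndot$-polar multiplicities with the minus sign coming from its position in the sequence, the middle term $\Q_{V(f)}^\bullet[n]$ encodes the Lê-number data via its vanishing cycle interpretation, and the right term contributes only smooth contributions from the normalization. The IPA-tuple condition on $z_1$ with respect to $f_0$ at $\0$ is precisely what is needed to decompose the hypercohomology into a localized contribution at $\0$ on the special fiber plus localized contributions at the finitely many points $p \in B_\epsilon \cap V(t - t_0)$ on the generic fiber. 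Reorganizing the identity produces the stated $i = 0$ formula.

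For $1 \leq i \leq n-2$, the IPA-tuple assumption means each successive slice by $z_1, z_2, \ldots, z_i$ is again an IPA-deformation with respect to the remaining coordinates, so one iterates the base-case argument on the sliced data. The disappearance of the $\Ndot$ correction in the second formula reflects an independent conservation law
\begin{equation*}
\lambda^i_{\Ndot_{V(f_0)}, \z}(\0) = \sum_q \lambda^i_{\Ndot_{V(f_{t_0})}, \z}(q) \quad (i \geq 1),
\end{equation*}
obtained by applying the same perverse-sheaf slicing argument to $\Ndot_{V(f)}$ by itself along $t$; the ``In particular'' statement is then the sum of the two cases. The main obstacle I expect is verifying cleanly that the IPA condition translates into the right characteristic-cycle statement: one must show that iterated slicing by an IPA-tuple genuinely produces $0$-cycles whose multiplicities sum over the generic fiber, with no hidden contribution appearing from positive-dimensional strata in the characteristic cycles of $\pi_*\Q^\bullet[n]$ or $\Ndot_{V(f)}$. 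Once the IPA condition is formulated in characteristic-cycle language (presumably in \secref{sec:IPA}), the conservation identities should follow from the additivity of characteristic polar multiplicities under short exact sequences of perverse sheaves.
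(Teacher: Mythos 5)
Your outline assembles the right objects --- the short exact sequence $0 \to \Ndot_{V(f)} \to \Q_{V(f)}^\bullet[n] \to \pi_*\Q_{\D\times\wt{V(f_0)}}^\bullet[n] \to 0$, additivity of characteristic polar multiplicities on short exact sequences, and the IPA condition as the localization hypothesis --- and this is indeed the skeleton of the paper's argument (that sequence appears, after applying $\phi_t[-1]$, in Lemma~\ref{lem:IPAimplies1}, where the unfolding hypothesis forces $\phi_{t\circ\pi}\Q^\bullet_{\D\times\wt{V(f_0)}}=0$ and hence $\lambda^0_{\Ndot_{V(f)},t}(\0) = \lambda^0_{\Q_{V(f)}^\bullet[n],t}(\0) = \left(\Gamma^1_{f,t}\cdot V(t)\right)_\0$). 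But two steps are not right as stated. First, the minus sign on $\lambda^0_{\Ndot_{V(f_0)},\z}(\0)$ does not come from the position of $\Ndot_{V(f)}$ in the short exact sequence: additivity on short exact sequences of perverse sheaves produces only plus signs. It comes from the restriction/nearby/vanishing-cycle sequence $0 \to \Pdot|_{V(t)}[-1] \to \psi_t[-1]\Pdot \to \phi_t[-1]\Pdot \to 0$ applied to $\Pdot = \Ndot_{V(f)}$, which gives $\lambda^0_{\Ndot_{V(f_0)},\z}(\0) = \lambda^1_{\Ndot_{V(f)},(t,\z)}(\0) - \lambda^0_{\Ndot_{V(f)},(t,\z)}(\0)$ (Theorem~\ref{thm:IPAimplies2}); the special-fiber quantity is the total-space $\lambda^1$ corrected by the vanishing cycles along $t$, and it is the $\lambda^1$ term that then spreads out over the generic slice.

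Second --- and this is the obstacle you flag at the end but do not resolve, and it is the heart of the proof --- restricting the sequence to the Milnor fiber $F_{t,\0}$ and taking alternating sums of hypercohomology ranks does not produce the finite sums in the statement once $\dim_\0\Sigma f \geq 2$: in that case $F_{t,\0}\cap\supp\Ndot_{V(f)}$ is positive-dimensional, and $\chi\,\hyp^*(F_{t,\0};\Ndot_{V(f)})$ picks up contributions from all of it, not only from the finitely many points $p$ with $\lambda^0_{\Ndot_{V(f_{t_0})},\z}(p)\neq 0$. The paper localizes at the level of cycles instead: Theorem~\ref{thm:IPAimplies2} (whose proof iterates the comparison isomorphisms guaranteed by the IPA-tuple hypothesis) converts the slice invariants into characteristic polar multiplicities of $\Ndot_{V(f)}$ with respect to the augmented tuple $(t,\z)$, and then the dynamic intersection property applied to the one-dimensional cycle $\Lambda^1_{\Ndot_{V(f)},(t,\z)}$ (and to $\Lambda^{i+1}_{f,(t,\z)}$, via Proposition~\ref{prop:basic} and Proposition~\ref{prop:lambda1}) distributes $\left(\Lambda^1\cdot V(t)\right)_\0$ over the finite set $\Lambda^1\cap V(t-t_0)$. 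Your ``independent conservation law'' for $i\geq 1$ is correct and is exactly what this yields, but it must be established by this cycle-theoretic route --- equivalently, by first applying the iterated $\phi_{z_i}[-1]\psi_{z_{i-1}}[-1]\cdots$ to cut the support down to a curve before slicing by $t$ --- rather than by an Euler characteristic computation on the full complex link.
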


We then conclude the chapter with some applications of this theorem to various dimensions, and obtain formulas in the same vein as Milnor's double point formula. In particular, we obtain the following result.

\begin{cor}[\corref{cor:surfacedeformation}]
Let $\pi_0: (\C^2,S) \to (\C^3,\0)$ be a finitely-determined map germ parameterizing a surface $V(f_0) \subseteq \C^3$, and let $T,C,$ and $\delta,$ denote the number of triple points, cross caps, and $A_1$-singularities, respectively, appearing in a stabilization of $\pi_0$. Then, the following equality holds:
\begin{align*}
 |\pi_0^{-1}(\0)| -1 = -C+T+\delta +\chi(F_{t_{|_{\Sigma f}},\0}),
\end{align*}
where $F_{t_{|_{\Sigma f}},\0}$ denotes the Milnor fiber of the unfolding parameter of such a stabilization $\im \pi = V(f)$, restricted to the singular locus of $f$ (that is, the complex link of $\Sigma f$ at $\0$).
\end{cor}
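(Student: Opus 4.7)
The plan is to specialize Theorem~\ref{thm:main} to dimension $n=3$, where $V(f_0)\subseteq\C^3$ and the total space of the stabilization $V(f)\subseteq\C^4$, and then evaluate each side using the normal forms for stable multi-germs of maps $\C^2\to\C^3$. For a generic IPA-tuple $\z=(z_1,z_2)$ at $\0$, the two relevant identities are the $i=0$ equation
\[
\lambda_{f_0,\z}^0(\0)+\lambda_{\Ndot_{V(f_0)},\z}^0(\0)=\sum_{p\in B_\epsilon\cap V(t-t_0)}\bigl(\lambda_{f_{t_0},\z}^0(p)+\lambda_{\Ndot_{V(f_{t_0})},\z}^0(p)\bigr),
\]
together with the $i=1$ identity $\lambda_{f_0,\z}^1(\0)=\sum_q\lambda_{f_{t_0},\z}^1(q)$.

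First I would identify the topological content of the left-hand side. Because $\Sigma f_0$ is one-dimensional at $\0$, the top L\^e number vanishes: $\lambda_{f_0,\z}^0(\0)=0$. Since $\pi_0$ has smooth normalization $\C^2$, the defining sequence $0\to\Ndot_{V(f_0)}\to\Q_{V(f_0)}^\bullet[2]\to\Idot_{V(f_0)}\to 0$, combined with $\Idot_{V(f_0)}\cong(\pi_0)_*\Q_{\C^2}^\bullet[2]$, gives $\dim H^{-2}(\Ndot_{V(f_0)})_\0=|\pi_0^{-1}(\0)|-1$. The generic-$\z$ polar multiplicity $\lambda_{\Ndot_{V(f_0)},\z}^0(\0)$ packages this stalk dimension together with the contribution measured along the branches of the double-point curve $D$ through $\0$; invoking Massey's L\^e-cycle Euler-characteristic formula together with the $i=1$ equation rewrites that additional contribution as $\chi(F_{t_{|_{\Sigma f}},\0})$, the Euler characteristic of the complex link of $\Sigma f$ at $\0$.

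Next I would evaluate the isolated contributions on the right-hand side. Stability of $\pi_{t_0}$ for $0<|t_0|\ll\epsilon\ll 1$ restricts the local models at non-generic points to triple points (normal form $f_{t_0}\sim xyz$ with $|\pi_{t_0}^{-1}(p)|=3$), cross caps (Whitney-umbrella normal form $z^2-x^2y$, with $\pi_{t_0}$ injective but not immersive), and isolated $A_1$ singularities of $V(f_{t_0})$. A direct calculation in each normal form yields combined $\lambda_{f_{t_0},\z}^0+\lambda_{\Ndot_{V(f_{t_0})},\z}^0$ contributions totaling $T-C+\delta$; the transverse double-curve stratum has no isolated critical behavior for either $f_{t_0}$ or $\Ndot_{V(f_{t_0})}$ and so contributes nothing at the $\lambda^0$-level, its topological content being absorbed into the $\chi$ term above.

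The hardest step is the cross-cap bookkeeping: at a Whitney umbrella both $\lambda_{f_{t_0},\z}^0$ and $\lambda_{\Ndot_{V(f_{t_0})},\z}^0$ are individually non-trivial, and one must carefully combine them to produce precisely the $-1$ net contribution responsible for the coefficient of $C$. A secondary but equally delicate task is the identification of the non-isolated part of $\lambda^0_{\Ndot_{V(f_0)}}(\0)$ with $\chi(F_{t_{|_{\Sigma f}},\0})$: this requires combining the $i=1$ equation with Massey's Euler-characteristic formula for L\^e numbers in the one-dimensional singular-locus setting and verifying that the resulting alternating sum collapses to a single Euler-characteristic invariant of the complex link of $\Sigma f$ at $\0$.
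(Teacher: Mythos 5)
Your overall strategy---specialize the deformation formula to a stabilization and evaluate the local contributions at the stable singularities---is the right one, but two of your concrete claims are false and would sink the computation. First, $\lambda_{f_0,\z}^0(\0)$ does not vanish when $\dim_\0 \Sigma f_0 = 1$; on the contrary, it is the principal numerical invariant in this setting (in the ICIS case Gaffney's formula, quoted in Remark~\ref{rem:Gaffneyrem}, gives $\lambda_{f_0,\z}^0(\0) = \delta + 2C + e(JM(\Sigma f))$). Second, the combined local contributions $\lambda_{f_{t_0},\z}^0(p) + \lambda_{\Ndot_{V(f_{t_0})},\z}^0(p)$ cannot total $T - C + \delta$: L\^e numbers and characteristic polar multiplicities of perverse sheaves are non-negative integers, so no bookkeeping at the cross caps can produce a net contribution of $-1$ per cross cap. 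In fact at a cross cap one has $\lambda_{\Ndot_{V(f_{t_0})},\z}^0(p) = 1$ (Example~\ref{exm:ndotwhitneyumbrella}) and $\lambda_{f_{t_0},\z}^0(p)=1$, so the combined contribution is $+2$. The negative signs in the corollary do not arise from local data on the $t=t_0$ slice: the $-\delta$ comes from the term $-\lambda_{\Ndot_{V(f)},(t,\z)}^0(\0) = -\left(\Gamma_{f,t}^1 \cdot V(t)\right)_\0$ in the $\Ndot$-deformation formula, and the sign of $C$ flips only upon forming the stalk Euler characteristic $\chi(\Ndot_{V(f_0)})_\0 = \lambda_{\Ndot_{V(f_0)},\z}^0(\0) - \lambda_{\Ndot_{V(f_0)},\z}^1(\0)$. (A smaller slip: for a surface in $\C^3$ with smooth normalization the stalk cohomology of $\Ndot_{V(f_0)}$ is concentrated in degree $-1$, not $-2$, with $\dim H^{-1}(\Ndot_{V(f_0)})_\0 = |\pi_0^{-1}(\0)| - 1$.)

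Note also that the paper's proof never invokes the combined identity or the L\^e numbers of $f_{t_0}$ at all. It uses only the $\Ndot$ half of Theorem~\ref{thm:main}, which yields $\lambda_{\Ndot_{V(f_0)},\z}^0(\0) = T + C - \delta + P$ (Theorem~\ref{thm:surfacedeformation}), where $P$ counts the points of the absolute polar curve $\Gamma_{(t,z)}^1(\Sigma f)$ met by $V(z)$. The real work is then eliminating $P$: one replaces $\Ndot_{V(f)}$ by $\Z_{\Sigma f}^\bullet[2]$ (legitimate because the stalk of $\Ndot_{V(f)}$ is generically $\Z$ along $\Sigma f$ for finitely-determined germs), relates $2T + P$ to the Euler characteristics of the pair of complex links $\mathbb{L}_{\Sigma f,\0}$ and $\mathbb{L}_{\Sigma f_0,\0}$, and obtains $P = \lambda_{\Ndot_{V(f_0)},z}^1(\0) - \chi(\mathbb{L}_{\Sigma f,\0}) - 2T$; substituting and using $\chi(\Ndot_{V(f_0)})_\0 = -|\pi_0^{-1}(\0)|+1$ gives the corollary. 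Your sketch gestures at this step (the topological content ``absorbed into the $\chi$ term''), but without an explicit identity tying $P$, $T$, $\lambda_{\Ndot_{V(f_0)},z}^1(\0)$, and $\chi(\mathbb{L}_{\Sigma f,\0})$ together, the argument does not close.
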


\bigskip

\section{IPA-Deformations}\label{sec:IPA}
 
\bigskip

Although we need to consider only the case of a family of parameterized hypersurfaces for this section, much of the machinery we use for Section~\ref{sec:lenums} and Section~\ref{sec:surface} does not require such restrictive hypotheses. That is, the notion of IPA-deformations and L\^{e} numbers (see Massey, \cite{ipadef} and \cite{lecycles}) apply to hypersurface singularities in general, not just parameterized hypersurfaces. 

\medskip

Suppose $\z = (z_0,\cdots,z_n)$ are local coordinates on an open neighborhood $\U \subseteq \C^{n+1}$ of $\0$, so that we have $T^*\U \cong \U \times \C^{n+1}$, with fiber-wise basis $(d_p z_0,\cdots,d_p z_n)$ of $(T^*\U)_p = \tau^{-1}(p)$, where $\tau : T^*\U \to \U$ is the canonical projection map.

\smallskip

Denote by $\Span \langle dz_0,\cdots,dz_k \rangle$ the subset of $T^*\U$ given by $\{(p,\sum_{i=0}^k w_i d_p z_i) \, | p \in \U, w_i \in \C\}$

Let $f:(\U,\0) \to (\C,0)$ be a (reduced) complex analytic function, where $\U$ is a connected open neighborhood of the origin in $\C^{n+1}$.  

Finally, let $\overline{T_f^*\U}$ denote the (closure of) the relative conormal space of $f$ in $\U$, i.e.,
\[
\overline{T_f^*\U} := \overline{\{(p,\xi) \in T^*\U \, | \, \xi(\ker d_p f) = 0\}}.
\]
It is important to note that $\overline{T_f^*\U}$ is a $\C$-conic subset of $T^*\U$, as we will consider its projectivization in Definition~\ref{def:cycles}.


The following definitions of the relative polar varieties of $f$ differ slightly from their more classical construction  (see, for example \cite{hammlezariski}, \cite{letopuse}, or \cite{leattach}), following that of \cite{ipadef},\cite{numcontrol}. Lastly, the intersection product appearing in the following definitions is that of proper intersections in complex manifolds (See Chapter 6 of \cite{fulton}).


\begin{defin}\label{def:relpolcurve}
The \textbf{relative polar curve of $f$ with respect to $z_0$}, denoted $\Gamma_{f,z_0}^1$, is, as an analytic cycle at the origin, the collection of those components of the cycle 
\[
\tau_* \left ( \overline{T_f^*\U} \cdot \im \, dz_0 \right )
\] 
which are not contained in $\Sigma f$, provided that $\overline{T_f^*\U}$ and $\im \, dz_0$ intersect properly in $T^*\U$ (where $\tau_*$ is the proper pushfoward of cycles). 
\end{defin}


 More generally, one can define the higher $k$-dimensional relative polar varieties $\Gamma_{f,\z}^k$ in this manner, by considering the \emph{projectivized} relative conormal space $\PP(\overline{T_f^*\U})$ as follows.  For $0 \leq k \leq n$, consider the subspace $\PP( \Span \langle dz_0,\cdots,dz_k \rangle )$ of $\PP(T^*\U) \cong \U \times \PP^n$, the projectivized cotangent bundle of $\U$ (The following definition \textbf{does not} require one to use the projectivized relative conormal space; we do so to make the formulas involved less cumbersome). 


\begin{defin}\label{def:cycles}
The \textbf{$(k+1)$-dimensional relative polar variety of $f$ with respect to $\z$}, denoted $\Gamma_{f,\z}^k$\,, is, as an analytic cycle at the origin, the collection of those components of 
\begin{align*}
\tau_* \left (\PP(\overline{T_f^*\U}) \cdot \PP\left (\Span \langle dz_0,\cdots,dz_k \rangle \right )\right )
\end{align*}
 which are not contained in the critical locus $\Sigma f$ at the origin, provided that $\PP(\overline{T_f^*\U})$ and $\PP \left (\Span \langle dz_0,\cdots,dz_k \rangle \right)$ intersect properly in $T^*\U$.  By abuse of notation, we also use $\tau$ to denote the canonical projection $\PP(T^*\U) \to \U$.
\end{defin}

See Section \ref{sec:classicalcycles} for the classical definition of $\Gamma_{f,\z}^k$. 




Throughout this section (and, this thesis in general), we will use the (shifted) \textbf{nearby and vanishing cycle functors} $\psi_f[-1]$ and $\phi_f[-1]$, respectively, from the bounded derived category $D_c^b(\U)$ of constructible complexes of sheaves on $\U$ to those on $V(f)$ (see for example \cite{kashsch}, \cite{dimcasheaves}, \cite{sgavii1}, or \cite{bbd}). The shifts $[-1]$ are need to for these functors to take perverse sheaves on $\U$ to perverse sheaves on $V(f)$. One of the most important properties of these functors is that, for an arbitrary bounded, constructible complex of sheaves $\Fdot$ on $\U$, we have isomorphisms
\begin{align}\label{eqn:vanishingcycle}
H^k(\psi_f[-1]\Fdot)_p &\cong \hyp^k(F_{f,p};\Fdot) \text{ and } \\
H^k(\phi_f[-1]\Fdot)_p &\cong \hyp^{k+1}(B_\epsilon(p),F_{f,p};\Fdot),
\end{align}
where $\hyp^*$ denotes hypercohomology of complexes of sheaves, and $F_{f,p} = B_{\epsilon}(p) \cap f^{-1}(\xi)$ denotes the \textbf{Milnor fiber of $f$ at $p$} (here $0 < |\xi| \ll \epsilon \ll 1$). The Milnor fiber of a generic linear form $L$ on a space $X$ at a point $p$ is often referred to as the \textbf{complex link} of $X$ at $p$, and we sometimes distinguish this with the notation $\mathbb{L}_{X,p}$. The (stratified) homeomorphism type of $\mathbb{L}_{X,p}$ is independent of the linear form chosen, provided $L$ is sufficiently generic.

If we use $\Fdot = \Z_\U^\bullet[n+1]$ for coefficients, then $\psi_f[-1]$ (resp. $\phi_f[-1]$) recovers the ordinary integral (resp. reduced) cohomology groups of the Milnor fiber $F_{f,p}$ of $f$ at $p$ (up to a shift):
$$
H^k(\phi_f[-1]\Z_\U^\bullet[n+1])_p \cong \wt H^{k+n}(F_{f,p};\Z).
$$
One of the most important properties of the nearby and vanishing cycles are that they fit into a distinguished triangle in the derived category $D_c^b(V(f))$:
$$
(\Fdot)_{|_{V(f)}}[-1]\to \psi_f[-1]\Fdot \to \phi_f[-1]\Fdot \xrightarrow{+1}.
$$
Additionally, the functors $\psi_f[-1]$ and $\phi_f[-1]$ are perverse exact, so this distinguished triangle yields the short exact sequence of perverse sheaves on $V(f)$:
$$
0 \to \Z_{V(f)}^\bullet[n]\xrightarrow{comp}\psi_f[-1]\Z_\U^\bullet[n+1]\xrightarrow{can}\phi_f[-1]\Z_\U^\bullet[n+1]\to 0,
$$
where the morphism $comp$ is known as the \textbf{comparison morphism}, and $can$ the \textbf{canonical morphism}.

\bigskip

We will also make frequent use of the \textbf{microsupport} $SS(\Fdot)$ of a (bounded, constructible) complex of sheaves $\Fdot$ which is a closed $\C^\times$-conic subset of $T^*\U$.  We will use the following characterization of $SS(\Fdot)$ in terms the vanishing cycles (See Prop 8.6.4, of \cite{kashsch}).

\begin{prop}[Microsupport]\label{prop:microsupport}
Let $\Fdot \in D_c^b(\U)$ and let $(p,\xi) \in T^*\U$. Then, the following are equivalent:
\begin{enumerate}
\item $(p,\xi) \notin SS(\Fdot)$. 

\item There exists an open neighborhood $\Omega$ of $(p,\xi)$ in $T^*\U$ such that, for any $q \in \U$ and any complex analytic function $g$ defined in a neighborhood of $q$ with $f(q) = 0$ and $(q,d_q g) \in \Omega$, one has $(\phi_g \Fdot)_q = 0$. 

\end{enumerate}
\end{prop}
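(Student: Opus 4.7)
The plan is to reduce the statement to the standard definition of the microsupport in terms of local cohomology supported in real half-spaces, and then translate to the language of vanishing cycles via the identification of the Milnor fiber with a real half-space thickening. This is essentially Proposition 8.6.4 of Kashiwara--Schapira, and my proposal follows that outline. First I would recall the foundational characterization: $(p,\xi) \notin SS(\Fdot)$ if and only if there exists an open neighborhood $\Omega \ni (p,\xi)$ in $T^*\U$ such that for every $q \in \U$ and every real-valued $C^1$ function $\psi$ defined near $q$ with $\psi(q) = 0$ and $(q, d_q \psi) \in \Omega$, the stalk $(R\Gamma_{\{\psi \geq 0\}} \Fdot)_q$ vanishes. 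Under the identification $T^*\U \cong T^*_\R \U$ afforded by the complex structure, the strategy is to match each holomorphic test function $g$ to the real test function $\psi = \mathrm{Re}(g)$.

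For (1) $\Rightarrow$ (2), I would show, for any holomorphic $g$ with $g(q) = 0$ and $(q, d_q g)$ in a suitable shrinking of $\Omega$, that the vanishing cycle stalk $(\phi_g \Fdot)_q$ is isomorphic (up to shift) to the local cohomology stalk $(R\Gamma_{\{\mathrm{Re}(g) \geq 0\}} \Fdot)_q$. This rests on the standard identification of the Milnor cone with a deformation retract of the real half-space determined by $\mathrm{Re}(g) \geq 0$ on a small ball. Once $\Omega$ is shrunk so that the covector $d_q(\mathrm{Re}(g))$ lies in a controlled neighborhood of the real image of $(p,\xi)$, the microsupport criterion forces $(\phi_g \Fdot)_q = 0$.

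For the harder direction (2) $\Rightarrow$ (1), the main difficulty is producing arbitrary real covectors, as required by the definition of microsupport, from the holomorphic test functions available in (2). Given a real covector $\eta \in T^*_{\R,q}\U$ near the real image of $(p,\xi)$, I would construct a holomorphic function $g$ near $q$ with $g(q) = 0$ and $d_q(\mathrm{Re}(g)) = \eta$; concretely, the complex structure on $\U$ yields an $\R$-linear bijection between $\C$-linear functionals on $T_q \U$ and real functionals via taking real parts, so one may take $g$ to be the unique $\C$-linear form whose real part has gradient $\eta$ at $q$. Then reversing the identification above transfers the vanishing of $(\phi_g \Fdot)_q$ to the vanishing of $(R\Gamma_{\{\psi \geq 0\}} \Fdot)_q$ for $\psi = \mathrm{Re}(g)$, thereby verifying the definitional criterion for $(p,\xi) \notin SS(\Fdot)$.

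The main technical obstacle is the uniform handling of neighborhoods in $T^*\U$: the open set $\Omega$ in (2) must yield a genuine open neighborhood in which the definitional microsupport criterion holds, requiring uniformity in the base point $q$ and the choice of $g$, together with quantitative control on how $d_q(\mathrm{Re}(g))$ varies with the data. I would also invoke the $\C^\times$-conicity of $SS(\Fdot)$ to propagate vanishing along rays and to reduce to the case $\xi \neq 0$, the case $\xi = 0$ being a reformulation of the support condition on $\Fdot$.
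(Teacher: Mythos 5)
A preliminary remark: the paper does not actually prove this proposition---it is quoted as Proposition 8.6.4 of Kashiwara--Schapira and used as a black box---so there is no internal argument to compare yours against, only the citation. That said, your outline of (1) $\Rightarrow$ (2) is the standard and correct one: for constructible $\Fdot$ one has $(\phi_g \Fdot)_q \cong \left(R\Gamma_{\{\mathrm{Re}(g) \geq 0\}}\Fdot\right)_q$ up to shift, and the definitional characterization of $SS(\Fdot)$ by half-space local cohomology then forces the vanishing once $d_q(\mathrm{Re}(g))$ lies in the relevant neighborhood.

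Your direction (2) $\Rightarrow$ (1), however, has a genuine gap. The definition of the microsupport demands $\left(R\Gamma_{\{\psi \geq 0\}}\Fdot\right)_q = 0$ for \emph{every} real $C^1$ function $\psi$ with $\psi(q) = 0$ and $d_q\psi$ in the chosen neighborhood, whereas your construction supplies, for each real covector $\eta$, exactly one admissible test function, namely $\psi = \mathrm{Re}(g)$ with $g$ the $\C$-linear form realizing $\eta$. A general $C^1$ function with the same differential at $q$ is not the real part of any holomorphic function (only pluriharmonic functions arise that way), so hypothesis (2) says nothing about it, and the definitional criterion is not verified. This is exactly where the $\C$-constructibility of $\Fdot$ must enter in an essential way, and it is where the substance of Kashiwara--Schapira's Proposition 8.6.4 lies: the hard implication is proved contrapositively, using that $SS(\Fdot)$ is a closed $\C^\times$-conic complex analytic Lagrangian subvariety to locate a generic smooth point $(q,\eta)$ of $SS(\Fdot)$ near $(p,\xi)$ and a holomorphic $g$ whose differential graph meets $SS(\Fdot)$ transversally there, at which the microlocal stalk---hence $(\phi_g\Fdot)_q$---is nonzero, contradicting (2). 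Your sketch gestures at conicity and at ``uniformity in $q$ and $g$,'' but the real missing ingredient is this structure-theoretic nonvanishing statement; without it the argument does not close.
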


It is instructive to think about the condition $(p,d_p g) \notin SS(\Fdot)$ from the perspective of microlocal/stratified Morse theory. That is, $(p,d_p g) \notin SS(\Fdot)$ if and only if $p$ is not a critical point of $g$ ``with coefficients in $\Fdot$".

Using constant coefficients, we make the following definition to clarify what we mean by a critical point of a function on a possibly singular space.

\begin{defin}\label{def:topcritlocus}
Let $h: (V(f),\0)\to(\C,0)$ be a complex analytic function. We define the \textbf{topological critical locus of $h$} to be the set
$$
\Sigma_{\Top} h := \supp \phi_h[-1]\Z_{V(f)}^\bullet[n] = \tau (\im dh \cap SS(\Z_{V(f)}^\bullet[n]) ).
$$
\end{defin}

\bigskip

 In order to compute numerical invariants associated to certain perverse sheaves (see the characteristic polar multiplicities (Section~\ref{sec:lenums}) and L\^{e} numbers), we need to choose linear forms that ``cut down" the support in a certain way. We now give several equivalent conditions for this ``cutting" procedure, that will be used throughout this paper (see \defref{def:IPAdef}).

\begin{prop}[IPA-Deformations]\label{prop:IPAequiv}
The following are equivalent:
\begin{enumerate}

\item $\dim_\0 \Gamma_{f,z_0}^1 \cap V(z_0) \leq 0$.

\smallskip

\item $\dim_\0 \Gamma_{f,z_0}^1 \cap V(f) \leq 0. $

\smallskip

\item  $ \dim_{(\0,d_\0 z_0)} \im \, dz_0 \cap (f \circ \tau)^{-1}(0) \cap \overline{T_f^*\U} \leq 0$, where again $\tau: T^*\U \to \U$ is the canonical projection map. 

\smallskip

\item $\dim_{(\0,d_\0 z_0)} SS(\psi_f[-1]\Z_\U^\bullet[n+1]) \cap \im \, dz_0 \leq 0$.

\smallskip

\item $\dim_{(\0,d_\0 z_0)} SS(\Z_{V(f)}^\bullet[n]) \cap \im \, dz_0 \leq 0$.

\smallskip

\item $\dim_\0 \supp \phi_{z_0}[-1]\Z_{V(f)}^\bullet[n] \leq 0$.

\smallskip

\item Away from $\0$, the comparison morphism $\Z_{V(f,z_0)}^\bullet[n-1] \to \psi_{z_0}[-1]\Z_{V(f)}^\bullet[n]$ is an isomorphism. 
\end{enumerate}

\end{prop}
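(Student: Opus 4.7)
The plan is to prove the chain of equivalences in three stages: the sheaf-theoretic equivalences (5)-(6)-(7), the microlocal identifications (3)-(4)-(5), and the cycle-theoretic equivalences (1)-(2)-(3).

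For (5) $\Leftrightarrow$ (6), I would apply \propref{prop:microsupport} directly to $\Fdot = \Z_{V(f)}^\bullet[n]$ and $g = z_0$. The condition in (5) that $SS(\Z_{V(f)}^\bullet[n]) \cap \im dz_0$ is $0$-dimensional at $(\0, d_\0 z_0)$ is equivalent to $(p, d_p z_0) \notin SS(\Z_{V(f)}^\bullet[n])$ for all $p$ in a punctured neighborhood of $\0$, which by \propref{prop:microsupport} gives $(\phi_{z_0}[-1]\Z_{V(f)}^\bullet[n])_p = 0$ for such $p$; this is exactly (6). For (6) $\Leftrightarrow$ (7), the distinguished triangle
\[
\Z_{V(f,z_0)}^\bullet[n-1] \longrightarrow \psi_{z_0}[-1]\Z_{V(f)}^\bullet[n] \longrightarrow \phi_{z_0}[-1]\Z_{V(f)}^\bullet[n] \xrightarrow{+1}
\]
identifies the cone of the comparison morphism with $\phi_{z_0}[-1]\Z_{V(f)}^\bullet[n]$, so the comparison is an isomorphism on a neighborhood of a point $p$ if and only if that vanishing cycles stalk vanishes at $p$.

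For the microlocal stage, I would invoke the identification
\[
SS\bigl(\psi_f[-1]\Z_\U^\bullet[n+1]\bigr) = \overline{T_f^*\U} \cap \tau^{-1}(V(f))
\]
(the L\^e--Mebkhout / Kashiwara--Schapira computation of the microsupport of the nearby cycles of the constant sheaf on $\U$), together with the containment $SS(\phi_f[-1]\Z_\U^\bullet[n+1]) \subseteq \overline{T_f^*\U} \cap \tau^{-1}(V(f))$. Applying sub-additivity of $SS$ to the fundamental triangle $\Z_{V(f)}^\bullet[n] \to \psi_f[-1]\Z_\U^\bullet[n+1] \to \phi_f[-1]\Z_\U^\bullet[n+1]$ yields the same equality for $SS(\Z_{V(f)}^\bullet[n])$. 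Both (4) and (5) then reduce to the condition that $\overline{T_f^*\U} \cap \tau^{-1}(V(f)) \cap \im dz_0$ is $0$-dimensional at $(\0, d_\0 z_0)$, which is precisely (3).

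For the cycle-theoretic stage, (2) $\Leftrightarrow$ (3) follows by pushing forward via $\tau$: since $\tau$ is an isomorphism on $\overline{T_f^*\U}$ above the smooth locus of $V(f)$, and $\Gamma_{f,z_0}^1$ is defined by excluding any component lying in $\Sigma f$, the projection of $\overline{T_f^*\U} \cap \im dz_0 \cap \tau^{-1}(V(f))$ near $(\0, d_\0 z_0)$ is $\Gamma_{f, z_0}^1 \cap V(f)$ near $\0$ (up to the $0$-dimensional fiber over $\0$ itself), and the dimensions agree. For (1) $\Leftrightarrow$ (2), any component $\gamma$ of $\Gamma_{f,z_0}^1$ through $\0$ is a curve on which $df$ and $dz_0$ are proportional, which classically forces $f_{|_\gamma}$ to be a unit times a power of $z_{0|_\gamma}$; hence $V(f) \cap \gamma$ and $V(z_0) \cap \gamma$ coincide as reduced germs at $\0$. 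The hard part will be the precise microlocal equality $SS(\psi_f[-1]\Z_\U^\bullet[n+1]) = \overline{T_f^*\U} \cap \tau^{-1}(V(f))$ and the analogous containment for the vanishing cycles: one must verify that no ``extra'' components of $\overline{T_f^*\U}$ supported over $\Sigma f$ pass through $(\0, d_\0 z_0)$ in a way that would make the microsupport strictly larger than the relative conormal cycle, so that intersecting with $\im dz_0$ captures exactly the polar-curve condition rather than a spurious enlargement.
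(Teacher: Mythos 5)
Your decomposition into three stages is essentially the paper's own, and your treatment of $(5)\Leftrightarrow(6)\Leftrightarrow(7)$ is exactly the argument given there. The genuine gap is in the direction $(5)\Rightarrow(4)$. Sub-additivity of the microsupport applied to the triangle $\Z_{V(f)}^\bullet[n]\to\psi_f[-1]\Z_\U^\bullet[n+1]\to\phi_f[-1]\Z_\U^\bullet[n+1]$ gives only the containment
\[
SS(\Z_{V(f)}^\bullet[n])\ \subseteq\ SS(\psi_f[-1]\Z_\U^\bullet[n+1])\cup SS(\phi_f[-1]\Z_\U^\bullet[n+1])\ \subseteq\ \overline{T_f^*\U}\cap(f\circ\tau)^{-1}(0),
\]
which yields $(4)\Rightarrow(5)$ but not the asserted \emph{equality} of microsupports. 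For the converse you must exclude a positive-dimensional component of $SS(\psi_f[-1]\Z_\U^\bullet[n+1])\cap\im\,dz_0$ through $(\0,d_\0 z_0)$ that lies in $SS(\phi_f[-1]\Z_\U^\bullet[n+1])$ but not in $SS(\Z_{V(f)}^\bullet[n])$; nothing in your argument rules this out. The paper closes this with a different triangle: writing $i:V(f)\hookrightarrow\U$ and $j:\U\backslash V(f)\hookrightarrow\U$, the triangle $i_*i^*[-1]\Z_\U^\bullet[n+1]\to j_!j^!\Z_\U^\bullet[n+1]\to\Z_\U^\bullet[n+1]\xrightarrow{+1}$ has third term whose microsupport is the zero section, so near $(\0,d_\0 z_0)$ (which lies off the zero section since $d_\0 z_0\neq 0$) the first two terms have \emph{equal} microsupport; combined with the identification from \cite{vanaf} of $SS(\psi_f[-1]\Z_\U^\bullet[n+1])$ with the components of $SS(j_!j^!\Z_\U^\bullet[n+1])$ lying over $V(f)$, this supplies the missing reverse containment.

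A second, smaller issue: you correctly flag but do not resolve the components of $\overline{T_f^*\U}\cap\im\,dz_0$ lying over $\Sigma f$. Your pushforward argument for $(2)\Leftrightarrow(3)$ only sees the part of the intersection over $\U\backslash\Sigma f$, which is where $\Gamma_{f,z_0}^1$ lives; condition $(3)$ is a priori strictly stronger than $(2)$, because $(\overline{T_f^*\U})_p$ may contain $d_p z_0$ along a whole curve inside $\Sigma f\cap V(f)$. Ruling this out is precisely the content of Proposition 2.6 of \cite{ipadef}, which the paper cites for $(1)\Leftrightarrow(2)\Leftrightarrow(3)$, and it does not follow from the projection considerations you give. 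Your argument for $(1)\Leftrightarrow(2)$ (proportionality of $d f$ and $d z_0$ along a component of the polar curve forcing the vanishing loci of $f$ and $z_0$ on that component to coincide) is the standard one and is fine.
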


\begin{proof}
The equivalence of statements (1), (2), and (3) are covered in Proposition 2.6  of \cite{ipadef}.

The equivalence (3) $\iff$ (4) follows directly from the equality 
\[
\overline{T_f^*\U} \cap (f \circ \tau)^{-1}(0) = SS(\psi_f[-1]\Z_\U^\bullet[n+1]).
\]
(See \cite{bmm} for the original result, although the phrasing used above is found in \cite{singenrich}).

To see the equivalence (4) $\iff (5)$, consider the natural distinguished triangle
\[
 i_*i^*[-1]\Z_{\U}^\bullet[n+1] \to j_!j^!\Z_{\U}^\bullet[n+1] \to \Z_{\U}^\bullet[n+1]  \overset{+1}{\to} \quad (\ddag)
\]
where $i : V(f) \hookrightarrow \U$, and $j : \U \backslash V(f) \hookrightarrow \U$. Then, by \cite{vanaf}, there is an equality of microsupports
\[
SS(\psi_f[-1]\Z_{\U}^\bullet[n+1]) = SS(j_!j^!\Z_{\U}^\bullet[n+1])_{\subseteq V(f)},
\]
where the subscript $\subseteq V(f)$ denotes the union of irreducible components of $SS(j_!j^!\Z_\U[n+1])$ that lie over the hypersurface $V(f)$. But, since $SS(\Z_{\U}[n+1]) \cong \U \times \{\0\}$, $(\ddag)$ implies that 
\[
SS(i_*i^*[-1]\Z_{\U}^\bullet[n+1]) = SS(j_!j^!\Z_{\U}^\bullet[n+1])_{\subseteq V(f)},
\]
by the triangle inequality for microsupports.  The claim follows after noting 
$$
i_*i^*[-1]\Z_{\U}^\bullet[n+1] = \Z_{V(f)}^\bullet[n].
$$

The equivalence $(5) \iff (6)$ follows easily from Proposition~\ref{prop:microsupport}, or see Theorem 3.1 of \cite{critpts}.

Lastly, one concludes $(6) \iff (7)$ trivially from the short exact sequence of perverse sheaves
\[
0 \to \Z_{V(f,z_0)}^\bullet[n-1] \to \psi_{z_0}[-1]\Z_{V(f)}^\bullet[n] \to \phi_{z_0}[-1]\Z_{V(f)}^\bullet[n] \to 0
\]
on $V(f,z_0)$. 

\end{proof}

\begin{rem}[$\Z$ vs. $\Q$ coefficients]\label{rem:ipacoefficients}
As we mentioned in the introduction of this section, all results hold with either $\Z$ coefficients or $\Q$ coefficients (depending on whether the normalization of $V(f)$ is smooth, or a $\Q$-homology manifold). To see this for \propref{prop:IPAequiv}, suppose that $\dim \supp \phi_L[-1]\Q_{V(f)}^\bullet[n] \leq 0$ but $\dim \supp \phi_L[-1]\Z_{V(f)}^\bullet[n] > 0$. Then, at a generic point $p$ of $\supp \phi_L[-1]\Z_{V(f)}^\bullet[n]$, the stalk cohomology of $\phi_L[-1]\Z_{V(f)}^\bullet[n]$ is a torsion $\Z$-module concentrated in a single degree by the perversity of $\phi_L[-1]\Z_{V(f)}^\bullet[n]$. However, this cohomology must be free Abelian (see e.g, L\^{e}'s classical result about the cohomology of the Milnor fiber, or Proposition 1.2.3 of \cite{hepmasparam}) and is therefore zero. The reverse implication, from $\Z$ to $\Q$ coefficients, is trivial.

Thus, $\dim_\0 \supp \phi_L[-1]\Q_{V(f)}^\bullet[n] \leq 0$ if and only if $\dim_\0 \supp \phi_L[-1]\Z_{V(f)}^\bullet[n] \leq 0$. 
\end{rem}


\begin{defin}\label{def:IPAdef}
Given an analytic function $f : (\U,\0) \to (\C,0)$ and a non-zero linear form $z_0 : (\U,0) \to (\C,0)$, we say that $f$ is a deformation of $f_{|_{V(z_0)}}$ with \textbf{isolated polar activity} at $\0$ (or, an \textbf{IPA-deformation} for short) if the equivalent statements of Proposition~\ref{prop:IPAequiv} hold. 
\end{defin}

\begin{rem}\label{rem:IPAvsprepolar}
IPA-deformations are closely related to the notion of prepolar deformations \cite{prepolardef}; given a Thom $a_f$ stratification  $\mathfrak{S}$ of $V(f)$ and linear form $L$, we say $f$ is a\textbf{ prepolar deformation} of $f_{|_{V(L)}}$ if $V(L)$ transversally intersects all strata $S \in \mathfrak{S} \backslash \{\0\}$ in a neighborhood of the origin. We can alternatively phrase this as 
$$
\dim_\0 \bigcup_{S \in \mathfrak{S}} \Sigma \left ( L_{|_S} \right ) \leq 0,
$$
where the union $\bigcup_{S \in \mathfrak{S}} \Sigma \left (L_{|_S} \right ) =: \Sigma_{\mathfrak{S}} L_{|_{V(f)}}$ is called the \textbf{stratified critical locus of $L_{|_{V(f)}}$ with respect to $\mathfrak{S}$} (see Definition 1.3 of \cite{critpts}) .

In particular, a prepolar deformation is defined with respect to a given $a_f$ stratification $\mathfrak{S}$, whereas an IPA-deformation does not refer to any stratification. While one does always have the inclusion
$$
\supp \phi_L[-1]\Z_{V(f)}^\bullet[n] =: \Sigma_{\Top} \left ( L_{|_{V(f)}} \right ) \subseteq \Sigma_{\mathfrak{S}} \left (L_{|_{V(f)}} \right )
$$
(this follows by stratified Morse theory, or Remark 1.10 of \cite{critpts}, or Proposition 8.4.1 and Exercise 8.6.12 of \cite{kashsch})), it is an open question whether or not there exist IPA deformations that are not prepolar deformations.
\end{rem}

We can iterate the notion of an IPA-deformation as follows.
\begin{defin}\label{def:IPAtuple}
Let $k \geq 0$. A $(k+1)-$tuple $(z_0,\cdots,z_k)$ is said to be an IPA-tuple for $f$ at $\0$ if, for all $1 \leq i \leq k$, $f_{|_{V(z_0,\cdots,z_{i-1})}}$ is an IPA-deformation of $f_{|_{V(z_0,\cdots,z_i)}}$ at $\0$.
\end{defin}


The following lemma follows from an inductive application of Theorem 1.1 of \cite{relcoresult}, and is crucial for our understanding of what IPA-deformation ``looks like" in the cotangent bundle (cf. Proposition~\ref{prop:IPAequiv}, item (2)).

\begin{lem}[Gaffney, Massey, \cite{GaffneyTerence2018Alfr}]\label{lem:IPAiterate2}
Let $k \geq 0$. Then, for all $p \in V(z_0,\cdots,z_{k-1})$ with $d_p z_k \notin \left (\overline{T_{f_{|_{V(z_0,\cdots,z_{k-1})}}}^*V(z_0,\cdots,z_{k-1})}\right)_p $ , we have 
 \[
  \left (\overline{T_f^*\U} \right )_p \cap \textnormal{Span}\langle d_p z_0,\cdots,d_p z_k \rangle = 0.
 \] 
\end{lem}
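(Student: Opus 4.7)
The plan is to proceed by induction on $k \geq 0$, with each inductive step removing one linear form via a single application of Theorem 1.1 of \cite{relcoresult}. For the base case $k = 0$, the hypothesis reads $d_p z_0 \notin (\overline{T_f^*\U})_p$; since the fiber $(\overline{T_f^*\U})_p$ is a $\C^*$-conic subset of $T_p^*\U$, any nonzero scalar multiple of $d_p z_0$ lying in the fiber would drag $d_p z_0$ itself into the fiber, so the hypothesis is equivalent to the conclusion $(\overline{T_f^*\U})_p \cap \Span\langle d_p z_0\rangle = 0$.

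For the inductive step at level $k \geq 1$, I would set $g := f_{|_{V(z_0)}}$, a function on the complex manifold $V(z_0)$, together with the $k$-tuple $(z_1,\ldots,z_k)$ of linear forms on $V(z_0)$ and the point $p \in V(z_0, z_1,\ldots, z_{k-1})$. Since $g_{|_{V(z_1,\ldots,z_{k-1})}} = f_{|_{V(z_0, z_1,\ldots,z_{k-1})}}$, the hypothesis of the lemma at level $k$ is precisely what is required to invoke the inductive hypothesis for $g$ at level $k-1$, yielding
\[
(\overline{T_g^*V(z_0)})_p \cap \Span\langle d_p z_1,\ldots, d_p z_k\rangle = 0.
\]
The role of Theorem 1.1 of \cite{relcoresult} is then to transfer this intersection statement from $V(z_0)$ back up to $\U$: the quotient map $q: T_p^*\U \twoheadrightarrow T_p^*V(z_0)$ with kernel $\Span\langle d_p z_0\rangle$ sends $(\overline{T_f^*\U})_p$ into $(\overline{T_g^*V(z_0)})_p$, and, under the lemma's hypothesis, this restriction is injective on the conormal fiber. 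For any $\xi \in (\overline{T_f^*\U})_p \cap \Span\langle d_p z_0,\ldots,d_p z_k\rangle$, the image $q(\xi)$ lies in the displayed intersection and is therefore zero, so $\xi \in \Span\langle d_p z_0\rangle$; the injectivity from Theorem 1.1 then forces $\xi = 0$.

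The main obstacle will be to verify that Theorem 1.1 of \cite{relcoresult} genuinely supplies the injectivity of $q$ on $(\overline{T_f^*\U})_p$ as a consequence of the lemma's hypothesis at level $k$ alone, rather than a separate condition on $d_p z_0$. Equivalently, one must argue that the assumed non-membership $d_p z_k \notin (\overline{T_{f_{|_{V(z_0,\ldots,z_{k-1})}}}^*V(z_0,\ldots,z_{k-1})})_p$ already rules out elements of $\Span\langle d_p z_0\rangle \setminus \{0\}$ from lying in $(\overline{T_f^*\U})_p$ through the chain of restrictions. Unpacking Theorem 1.1 in conjunction with the equivalences of \propref{prop:IPAequiv}, particularly item (3), should provide this, but the bookkeeping across the iterated restrictions — keeping track of which transversality hypothesis is being invoked at each intermediate stage — is where the work lies.
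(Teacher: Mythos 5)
The paper does not actually prove this lemma: it is imported from Gaffney--Massey with the single indication that it ``follows from an inductive application of Theorem 1.1 of \cite{relcoresult}.'' Your skeleton --- induct on $k$, peel off $z_0$, and use the cited theorem to pass between $\overline{T_f^*\U}$ and $\overline{T_{f|_{V(z_0)}}^*V(z_0)}$ --- is consistent with that indication, your base case is correct ($\C$-conicness of the fiber does reduce $d_pz_0\notin(\overline{T_f^*\U})_p$ to the span statement), and your observation that the level-$k$ hypothesis for $f$ is literally the level-$(k-1)$ hypothesis for $g=f_{|_{V(z_0)}}$ is right.

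The problem is that the inductive step, as written, is a specification of what the cited theorem must deliver rather than an argument. After the inductive hypothesis gives $(\overline{T_g^*V(z_0)})_p\cap\Span\langle d_pz_1,\dots,d_pz_k\rangle=0$, you need two facts about the covector-restriction map $q:T_p^*\U\to T_p^*V(z_0)$: (a) $q$ carries $(\overline{T_f^*\U})_p$ into $(\overline{T_g^*V(z_0)})_p$, and (b) $\ker q\cap(\overline{T_f^*\U})_p=0$, i.e., $d_pz_0\notin(\overline{T_f^*\U})_p$. Neither is automatic. For (a), elements of $(\overline{T_f^*\U})_p$ arise as limits of $\lambda_i\,d_{q_i}f$ from points $q_i$ that need not lie in $V(z_0)$, and such limits are not in general captured by the closure taken inside $V(z_0)$; this containment is exactly the content of a relative conormal theorem and holds only under a genericity condition on $z_0$ relative to $f$. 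For (b), nothing you have written derives the exclusion of $d_pz_0$ from the lemma's sole hypothesis, which concerns $d_pz_k$ and the deepest slice. You flag this yourself as ``the main obstacle,'' but since the base case is trivial and the rest is linear algebra, this deferred step \emph{is} the lemma: the essential (and nonobvious) content of Theorem 1.1 in the form needed here is that a nontrivial intersection of $(\overline{T_f^*\U})_p$ with $\Span\langle d_pz_0,\dots,d_pz_{k-1}\rangle$ would force the relative conormal of the iterated slice to degenerate at $p$ badly enough to contain $d_pz_k$, so that the top-level hypothesis retroactively polices all lower levels. Until that implication is extracted and proved (or precisely quoted), the induction does not close. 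Note that in the paper's only use of the lemma, \propref{prop:basic}, the full IPA-tuple hypothesis is in force, which supplies a slicing condition at each level $i=1,\dots,k$ of the flag; with those stronger hypotheses your level-by-level induction can be completed, but that is not the statement as given.
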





The main goal of this subsection is the following result. This result, originally from \cite{lecycles}, is presented here with the ``weaker'' hypothesis of choosing an IPA-tuple, in lieu of a prepolar-tuple. For the definition of the L\^{e} numbers of $f$ with respect to a tuple of linear forms $\z$, see Section \ref{sec:classicalcycles}. 

\begin{prop}[Existence of L\^{e} Numbers of a Slice]\label{prop:basic}
Suppose that $\z = (z_0,\cdots,z_n)$ is an IPA-tuple for $f$ at $\0$, and use coordinates $\widetilde{\z} = (z_1,\cdots,z_n)$ for $V(z_0)$.
Then, for $0 \leq i \leq \dim_\0 \Sigma f$, the L\^{e} numbers $\lambda_{f,\z}^i(\0)$ are defined, and the following equalities hold:
\begin{align*}
\lambda_{f_{|_{V(z_0)}},\widetilde{\z}}^0(\0) &= \left(\Gamma_{f,z_0}^1\cdot V(z_0)\right)_\0 + \lambda_{f,\z}^1(\0) \\
\lambda_{f_{|_{V(z_0)}},\widetilde{\z}}^i(\0) &= \lambda_{f,\z}^{i+1}(\0),
\end{align*}
for $1 \leq i \leq \dim_\0 \Sigma f-1$, where $\Gamma_{f,z_0}^1$ is the relative polar curve of $f$ with respect to $z_0$.
\end{prop}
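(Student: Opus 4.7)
The plan is to mirror the proof of the analogous statement for prepolar-tuples given in \cite{lecycles}, replacing each invocation of prepolarity by a use of \propref{prop:IPAequiv} and \lemref{lem:IPAiterate2}. Recall that the L\^{e} cycles $\Lambda^i_{f,\z}$ and polar varieties $\Gamma^i_{f,\z}$ are built recursively from the relative conormal by intersecting with the $V(z_i)$'s and separating, at each step, components outside $\Sigma f$ (which feed into the next polar variety) from those inside $\Sigma f$ (which form the next L\^{e} cycle); the L\^{e} number $\lambda^i_{f,\z}(\0)$ is then the intersection multiplicity of $\Lambda^i_{f,\z}$ with $V(z_0,\dots,z_{i-1})$ at $\0$. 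Classically, prepolarity is invoked only to guarantee that each of these intersections is proper.

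First, I would verify that the IPA-tuple hypothesis supplies precisely the properness required. Item~(1) of \propref{prop:IPAequiv}, applied to each slice $f_{|_{V(z_0,\dots,z_{k-1})}}$ with slicing hyperplane $V(z_k)$, yields $\dim_\0 \Gamma^1_{f_{|_{V(z_0,\dots,z_{k-1})}},z_k}\cap V(z_k)\leq 0$, and \lemref{lem:IPAiterate2} lifts this into the cotangent bundle to give a proper intersection
\[
\PP(\overline{T_f^*\U})\cdot \PP(\Span\langle dz_0,\dots,dz_k\rangle)
\]
at the origin. Pushing forward along $\tau$ and separating components contained in $\Sigma f$ from those not contained in $\Sigma f$ produces $\Lambda^k_{f,\z}$ and $\Gamma^{k+1}_{f,\z}$; the iterated IPA-tuple condition then guarantees that each of these meets $V(z_0,\dots,z_{k-1})$ properly at $\0$, so every L\^{e} number appearing in the statement is well-defined.

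Second, with all cycles available, I would derive the numerical equalities. The key geometric input is that \lemref{lem:IPAiterate2} identifies, away from $\Sigma f$, the $V(z_0)$-slice of $\PP(\overline{T_f^*\U})$ with $\PP\bigl(\overline{T^*_{f_{|_{V(z_0)}}}V(z_0)}\bigr)$; the remaining components, supported over $\Sigma f\cap V(z_0)$, account for the contribution of $\Lambda^1_{f,\z}$ picking up a $V(z_0)$-factor. Intersecting further with $V(z_1,\dots,z_i)$ and pushing down through $\tau$ gives, on one side, $\lambda^0_{f_{|_{V(z_0)}},\widetilde{\z}}(\0)$ (when $i=0$) or $\lambda^i_{f_{|_{V(z_0)}},\widetilde{\z}}(\0)$ (when $i\geq 1$); on the other side, the sum $\lambda^{i+1}_{f,\z}(\0)$ plus, only in the $i=0$ case, the extra polar contribution $\bigl(\Gamma^1_{f,z_0}\cdot V(z_0)\bigr)_\0$. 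Equating the two expressions yields the claimed formulas.

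The main obstacle, and where IPA-tuples demand more care than prepolar-tuples, is the bookkeeping that separates ``polar'' components (outside $\Sigma f$) from ``L\^{e}'' components (inside $\Sigma f$) under iterated slicing. Prepolarity anchors this separation via transversality to a fixed Thom $a_f$-stratification, making it automatic that no excess intersection appears; without such a stratification, one must instead argue entirely through the conormal-bundle geometry of \lemref{lem:IPAiterate2} and confirm step-by-step that no spurious components inflate the intersection multiplicities.
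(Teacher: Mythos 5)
Your proposal is correct and follows essentially the same route as the paper: both transplant the proof of Theorem 1.28 of \cite{lecycles} \emph{mutatis mutandis}, with \lemref{lem:IPAiterate2} supplying the conormal-level properness that prepolarity used to provide. The paper merely makes your ``properness of the iterated slices'' step explicit, reducing it to the single condition $\dim_\0 \Gamma_{f,\z}^{i+1} \cap V(f) \cap V(z_0,\cdots,z_{i-1}) \leq 0$ and verifying it by a limiting-covector argument that contradicts \lemref{lem:IPAiterate2}.
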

\begin{proof}
The proof follows Theorem 1.28 of \cite{lecycles}, \emph{mutatis mutandis} (changing prepolar to IPA and working with covectors instead of tangent hyperplanes). 


Via the Chain Rule, it suffices to demonstrate that 
$$
\dim_\0 \Gamma_{f,\z}^{i+1} \cap V(f) \cap V(z_0,\cdots,z_{i-1}) \leq 0,
$$
since any analytic curve in $\Gamma_{f,\z}^{i+1} \cap V \left ( \frac{\partial f}{\partial z_i} \right ) \cap V(z_0,\cdots,z_{i-1})$ passing through $\0$ must be contained in $V(f)$, where $\Gamma_{f,\z}^{i+1}$ is the $(i+1)$-dimensional relative polar variety of $f$ with respect to $\z$ (\defref{def:cycles}). 

Suppose that we had a sequence of points $p \in  \Gamma_{f,\z}^{i+1} \cap V(f) \cap V(z_0,\cdots,z_{i-1})$ approaching $\0$. As each $p$ is contained in $\Gamma_{f,\z}^{i+1}$, for each $p$ we can find a sequence $p_k \to p$ with $p_k \notin \Sigma f$ satisfying $\langle d_{p_k} f \rangle \subseteq \text{Span}\langle d_{p_k} z_0,\cdots,d_{p_k} z_{i-1} \rangle$ for each $k$. But then, by construction, we have found a nonzero element in the intersection $\left ( \overline{T_f^*\U} \right )_p \cap \text{Span}\langle d_p z_0\cdots,d_p z_{i-1} \rangle$, contradicting Lemma~\ref{lem:IPAiterate2}. 

\end{proof}

\section{Unfoldings and $\Ndot_{V(f)}$}\label{sec:unfold}

As mentioned in the introduction of this section, we will be considering parameterized hypersurfaces that are the total space of a family of parameterized hypersurfaces. We make this precise with the following definition. 

\begin{defin}\label{def:unfolding}
A parameterization $\pi :(\D \times \wt {V(f_0)},\{0\} \times S) \to (V(f),\0)$ is said to be a \textbf{one-parameter unfolding} with unfolding parameter $t$ if $\pi$ is of the form
$$
\pi(t,\z) = (t,\pi_t(\z))
$$
where $\pi_0(\z) := \pi(0,\z)$ is a generically one-to-one parameterization of $V(f,t)$. 
\end{defin}

We say that a parameterization $\pi_0$ has an \textbf{isolated instability} at $\0$ with respect to an unfolding $\pi$ of $\pi_0$ with parameter $t$ if one has $\dim_\0 \Sigma_{\textnormal{top}} t_{|_{\im \pi}} \leq 0$. Compare this with the more general (standard) notion in \secref{sec:singmaps}.

The following proposition is one of our main motivations for using IPA-deformations: they naturally appear from one-parameter unfoldings with isolated instabilities. 

\begin{prop}\label{prop:IPAexistence}
Suppose $\pi : (\D \times \wt {V(f_0)},\{0\} \times S) \to (V(f),\0)$ is a 1-parameter unfolding of $\pi_0$ with unfolding parameter $t$, such that $\pi_0$ has an isolated instability at $\0$ with respect to $\pi$. Then, $f$ is an IPA-deformation of $f_{|_{V(t)}}$ at $\0$. 
\end{prop}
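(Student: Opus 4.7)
The plan is to observe that this proposition is essentially a definitional reformulation: the hypothesis of isolated instability at $\0$ (with respect to $\pi$) is, once unpacked via \defref{def:topcritlocus}, literally item (6) of \propref{prop:IPAequiv} applied to the coordinate $t$. So no new analytic work is required; the proof amounts to a notational check that bridges \defref{def:IPAdef} with the standard definition of an isolated instability of an unfolding.

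Concretely, the first step is to use that $\pi$ parameterizes $V(f)$, so $\im \pi = V(f)$ and therefore $t_{|_{\im \pi}} = t_{|_{V(f)}}$ as a complex analytic function on $V(f)$. Applying \defref{def:topcritlocus} to $h = t_{|_{V(f)}}$ gives the identification
\[
\Sigma_{\Top}\, t_{|_{\im \pi}} \;=\; \supp \phi_t[-1]\Z_{V(f)}^\bullet[n],
\]
so the isolated-instability hypothesis becomes $\dim_\0 \supp \phi_t[-1]\Z_{V(f)}^\bullet[n] \leq 0$.

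The second step is to note that $t$ is a linear coordinate on the ambient $\C^{n+1}$, hence a valid choice for the non-zero linear form $z_0$ in \propref{prop:IPAequiv}. The displayed inequality is then precisely condition (6) of \propref{prop:IPAequiv} with $z_0 = t$, so by the equivalence of (1)--(7) the defining condition of \defref{def:IPAdef} is satisfied. Thus $f$ is an IPA-deformation of $f_{|_{V(t)}}$ at $\0$.

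There is no real obstacle here, since the substantive content has already been absorbed into the microlocal/stratified Morse characterizations in \propref{prop:IPAequiv}. The role of the proposition is simply to confirm that the intuitively natural notion of an "isolated instability" for one-parameter unfoldings, as used in the singularity theory of finitely-determined maps, is a legitimate source of IPA-deformations in the sense of this paper, so that the machinery developed in \secref{sec:lenums} and \secref{sec:surface} applies without further verification.
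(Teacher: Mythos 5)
Your proof is correct and takes essentially the same route as the paper: both unpack the definition of an isolated instability via \defref{def:topcritlocus} and then invoke the equivalences of \propref{prop:IPAequiv} with $z_0 = t$. The only cosmetic difference is that you land on condition (6) (the support of $\phi_t[-1]\Z_{V(f)}^\bullet[n]$) while the paper lands on condition (5) (the microsupport intersected with $\im dt$); since \defref{def:topcritlocus} records both descriptions as equal, this is the same argument.
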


\begin{proof}
 By definition, $\pi_0$ has an isolated instability at $\0$ with respect to the unfolding $\pi$ with parameter $t$ if 
$$
\dim_\0 \Sigma_{\textnormal{top}} \left (t_{|_{V(f)}} \right ) \leq 0.
$$
 Following Definition 1.9 of \cite{critpts}, 
\begin{align*}
 \Sigma_{\textnormal{top}} \left (t_{|_{V(f)}} \right ) &= \overline{\{ p \in V(f) \, | \, (p,d_p t) \in SS(\Z_{V(f)}^\bullet[n])\}} \\
 &= \tau \left (SS(\Z_{V(f)}^\bullet[n]) \cap \im dt \right ),
 \end{align*}
 where $\tau: T^*\U \to \U$ is the natural projection. This follows immediately from Proposition~\ref{prop:microsupport}.  
 
 Consequently, if $\dim_\0 \Sigma_{\textnormal{top}} \left (t_{|_{V(f)}} \right )\leq 0$, it follows that $(\0,d_\0 t)$ is an isolated point in the intersection $SS(\Z_{V(f)}^\bullet[n]) \cap \im dt$, and the  the result follows by Proposition~\ref{prop:IPAequiv}.
\end{proof}

\begin{rem}\label{rem:finitelydeterminedIPA}
It is well-known that finitely-determined map germs $\pi_0$ have isolated instabilities with respect to a generic one-parameter unfolding (\cite{MatherJohnN1973GP} pg. 241, and \cite{1976POFD}). Consequently, generic one-parameter unfoldings of finitely-determined maps parameterizing hypersurfaces all give IPA-deformations. We shall make use of this fact later in \secref{sec:surface}.
\end{rem}

\begin{rem}\label{rem:ndotslicing}
If $\pi$ is a one-parameter unfolding of a parameterization $\pi_0$, then for all $t_0$ small, it is easy to see that there is an isomorphism ${\Ndot_{V(f)}}_{|_{V(t-t_0)}}[-1] \cong \Ndot_{V(f_{t_0})}$, where $\pi_{t_0}(\mbf z) = \pi(t_0,\mbf z)$. 
\end{rem}


\begin{exm}
 In the situation of Milnor's double-point formula,  $\pi:(\D \times \C, \{0\} \times S) \to (\C^3,\0)$ parameterizes the deformation of the curve $V(f_0)$ with $r$ irreducible components at $\0$ into a curve $V(f_{t_0})$ with only double-point singularities.  Hence, $\dim_\0 V(f) = 2$, and the image multiple-point set $D$ is purely 1-dimensional at $\0$. 
 
 
 Since $\pi$ is a one-parameter unfolding with parameter $t$, we moreover have 
\[
 {\Ndot_{V(f)}}_{|_{V(t-t_0)}}[-1] \cong \Ndot_{V(f_{t_0})},
 \]
 where $\Ndot_{V(f_{t_0})}$ is the multiple-point complex of the parameterization $\pi_{t_0}(\z)$. For all $t_0 \neq 0$ small, $\Ndot_{V(f_{t_0})}$ is supported on the set of double points of $V(f_{t_0})$, and at each such double-point $p$ we have $\rank H^0(\Ndot_{V(f_0)})_p = |\pi^{-1}(p)|-1 = 1$. 
 
 
 At $\0 \in V(f_0)$, we have $\pi^{-1}(\0) = S$, and $|S| = r$ by assumption. Thus, $\rank H^0(\Ndot_{V(f_0)})_\0 = r-1$. 
\end{exm}

\section{Characteristic Polar Multiplicities}\label{sec:lenums}
The central concept of this section, the characteristic polar multiplicities of a perverse sheaf, were first defined and explored in \cite{numinvar}. These multiplicities, defined with respect to a ``nice" choice of a tuple of linear forms $\z = (z_0,\cdots,z_s)$, are non-negative, integer-valued functions that mimic the properties of the L\^{e} numbers associated to non-isolated hypersurface singularities (see \cite{lecycles}), and the characteristic polar multiplicities of the vanishing cycles $\phi_f[-1]\Z_\U^\bullet[n+1]$ with respect to $\z$ coincide with the L\^{e} numbers of $f$ with respect to $\z$.



\begin{defin}[Corollary 4.10 \cite{numinvar}]\label{def:charpol}
Let $\Pdot$ be a perverse sheaf on $V(f)$, with $\dim_\0 \supp \Pdot = s$. Let $\z = (z_0,\cdots,z_s)$ be a tuple of linear forms such that, for all $0 \leq i \leq s$, we have
\[
\dim_\0 \supp \phi_{z_i-z_i(p)}[-1]\psi_{z_{i-1}-z_{i-1}(p)}[-1]\cdots\psi_{z_0-z_0(p)}[-1]\Pdot \leq 0.
\]
Then, the $i$-dimensional \textbf{characteristic polar multiplicity of $\Pdot$} with respect to $\z$ at $p \in V(g)$ is defined and given by the formula
\[
\lambda_{\Pdot,\z}^i(p) = \rank_\Z H^0(\phi_{z_i-z_i(p)}[-1]\psi_{z_{i-1}-z_{i-1}(p)}[-1]\cdots\psi_{z_0-z_0(p)}\Pdot)_p.
\]
\end{defin}



\begin{rem}
In general, one can define the characteristic polar multiplicities of any object in the bounded, derived category of constructible sheaves on $V(f)$, but they are slightly more cumbersome to define, and no longer need to be non-negative.
\end{rem}


\begin{exm}\label{exm:lecharmult}
Let $f :\U \to \C$ be an analytic function, with $f(\0) = 0$, $\U$ an open neighborhood of the origin in $\C^{n+1}$, and $\dim_\0 \Sigma f = s$. Then, $\phi_f[-1]\Z_\U^\bullet[n+1]$ is a perverse sheaf on $V(f)$, with support equal to $\Sigma f \cap V(f)$.  Indeed, the containment $\supp \phi_f[-1]\Z_\U^\bullet[n+1] \subseteq \Sigma f \cap V(f)$ follows from the complex analytic Implicit Function Theorem. For the reverse containment, if $p \notin \supp \phi_f[-1]\Z_\U^\bullet[n+1]$, then the Milnor monodromy on the nearby cycles is the identity morphism, so that the Lefschetz number of the monodromy cannot be zero; by A'Campo's result \cite{acamp}, we therefore have $p \notin V(f) \cap \Sigma f$.

We then have 
\[
\lambda_{f,\z}^i(p) = \lambda_{\phi_f[-1]\Z_{\U}^\bullet[n+1],\z}^i(p)
\]
for all $0 \leq i \leq s$, and all $p$ in an open neighborhood of $\0$ \cite{numinvar}. 
\end{exm}

\begin{rem}[$\Z$ vs. $\Q$ Coefficients]\label{rem:lenumcoefficients}
By Massey (Theorem 3.4 \cite{MR2215774}), if $\dim_\0 \Sigma f = s$, then there is a chain complex of \textbf{free} Abelian groups
$$
0 \xrightarrow{\partial_{s+1}} \Z^{\lambda_{f,\z}^s(p)} \xrightarrow{\partial_s} \Z^{\lambda_{f,\z}^{s-1}(p)} \xrightarrow{\partial_{s-1}} \cdots \xrightarrow{\partial_2} \Z^{\lambda_{f,\z}^1(p)} \xrightarrow{\partial_1} \Z^{\lambda_{f,\z}^0(p)} \xrightarrow{\partial_0} 0
$$
satisfying $\ker \partial_j / \im \partial_{i+1} \cong \wt H^{n-j}(F_{f,\0};\Z)$. Since this complex is free, tensoring this complex with $\Q$ will compute $\wt H^{n-j}(F_{f,\0};\Q)$. Hence, we can use either $\Z$ or $\Q$ coefficients in when characterizing the L\^{e} numbers $\lambda_{f,\z}^i(p)$ in terms of the characteristic polar multiplicities of the vanishing cycles. 

\end{rem}

\begin{exm}
If $\dim_\0 \Sigma f = 0$, any non-zero linear form $z_0$ suffices for this construction, since $\psi_{z_0}[-1]\phi_f[-1]\Z_{\U}^\bullet[n+1] = 0$. Then, the only non-zero L\^{e} number of $f$ is $\lambda_{f,z_0}^0(\0)$, and we have
\begin{align*}
\lambda_{f,z_0}^0(\0) &= \rank_\Z H^0(\phi_{z_0}[-1]\phi_f[-1]\Z_{\U}^\bullet[n+1])_\0 \\
&= \rank_\Z H^0(\phi_f[-1]\Z_\U^\bullet[n+1])_\0 \\
&= \text{Milnor number of $f$ at $\0$}.
\end{align*}
 
\end{exm}


\begin{exm}
If $\dim_\0 \Sigma f = 1$, we need $z_0$ such that $\dim_\0 \Sigma \left (f_{|_{V(z_0)}} \right ) = 0$, and any non-zero linear form suffices for $z_1$.
Then the only non-zero L\^{e} numbers of $f$ with respect to $\z = (z_0,z_1)$ are $\lambda_{f,\z}^0(\0)$ and $\lambda_{f,\z}^1(p)$ for $p \in \Sigma f$. At $\0$, we have
\begin{align*}
\lambda_{f,\z}^1(\0) &= \rank H^0(\phi_{z_1}[-1]\psi_{z_0}[-1]\phi_f[-1]\Z_\U^\bullet[n+1])_\0  \\
&= \sum_{C \subseteq \Sigma f \text{ irr.comp. at $\0$}} \overset{\circ}{\mu}_C \left (C \cdot V(z_0) \right )_\0,
\end{align*}
where $\overset{\circ}{\mu}_C$ denotes the generic transverse Milnor number of $f$ along $C \backslash \{0\}$. 
\end{exm}

\begin{rem}\label{rem:charpolcycleeuler}
Analogous to the L\^{e} numbers $\lambda_{f,\z}^i(p)$, the characteristic polar multiplicities of a perverse sheaf may be expressed as intersection numbers. That is, suppose we have a perverse sheaf $\Pdot$ and a tuple of linear forms $\z$ such that, for all $0 \leq i \leq \dim_\0 \supp \Pdot$, the characteristic polar multiplicities $\lambda_{\Pdot,\z}^i(p)$ are defined for all $p$ in a neighborhood $\U$ of $\0$.  Then, there is a unique collection of non-negative analytic cycles $\Lambda_{\Pdot,\z}^i$ called the \textbf{characteristic polar cycles} of $\Pdot$ with respect to $\z$ satisfying, for all $p \in \U$,
\[
\lambda_{\Pdot,\z}^i(p) = \left ( \Lambda_{\Pdot,\z}^i \cdot V(z_0-p_0,\cdots,\cdots,z_{i-1}-p_{i-1}) \right )_p.
\]
 These cycles can also be thought of as being defined by the constructible function $\chi(\Pdot)_p$, so that
$$
\chi(\Pdot)_p := \sum_i (-1)^i H^i(\Pdot)_p = \sum_i (-1)^i \lambda_{\Pdot,\z}^i(p).
$$
\end{rem}

\begin{exm}\label{exm:ndottriplepoint}
To illustrate this method of computing the characteristic polar multiplicities, we will compute $\lambda_{\Ndot_{V(f)},\z}^0(\0)$ and $\lambda_{\Ndot_{V(f)},\z}^1(\0)$ for a triple point singularity in $\C^3$, e.g., $V(f) = V(xyz)$. Clearly $V(f)$ is parameterized (the normalization $\pi$ of $V(xyz)$ separates the three planes into a disjoint union in three copies of $\C^3$), and so $\Ndot_{V(f)}$ has stalk cohomology concentrated in degree $-1$, implying 
$$
\chi(\Ndot)_p = -|\pi^{-1}(p)| + 1.
$$
Away from the origin, on the singular locus of $V(xyz)$, $\chi(\Ndot_{V(f)})_p$ has value $-1$ everywhere, and so we can identify the $1$-dimensional characteristic polar cycle of $\Ndot_{V(f)}$ as the sum of the lines of intersection of these three planes, each weighted by 1. Thus, $\lambda_{\Ndot_{V(f)},\z}^1(\0) = 3$. Since $\chi(\Ndot_{V(f)})_\0 = -2$, we find that $\lambda_{\Ndot_{V(f)},\z}^0(\0) = 1$, from the equality
$$
-2 = \chi(\Ndot_{V(f)})_\0 = \lambda_{\Ndot_{V(f)},\z}^0(\0) - \lambda_{\Ndot_{V(f)},\z}^1(\0) = \lambda_{\Ndot_{V(f)},\z}^0(\0) -3.
$$
\end{exm}

\begin{rem}\label{rem:charpolcycle}
We will need the representation of the characteristic polar multiplicities as intersection numbers in Section~\ref{sec:surface} when we will use the dynamic intersection property for proper intersections to understand $\lambda_{\Ndot_{V(f)},\z}^i(0)$. By this, we mean the equality 
$$
\left ( \Lambda_{\Pdot,\z}^i \cdot V(z_0,z_1,\cdots,z_{i-1}) \right )_\0 = \sum_{p \in B_\epsilon \cap \Lambda_{\Pdot,\z}^i \cap V(z_0-t)} \left (\Lambda_{\Pdot,\z}^i \cdot V(z_0-t,z_1,\cdots,z_{i-1}) \right )_p
$$
for $0 < |t| \ll \epsilon \ll 1$ (see chapter 6 of \cite{fulton}). Additionally, we will make use of the fact that characteristic polar multiplicities of perverse sheaves are additive on short exact sequences in \secref{sec:surface}. Precisely, if 
$$
0 \to \Adot \to \Bdot \to \Cdot \to 0
$$
is a short exact sequence of perverse sheaves, and if coordinates $\z$ are generic enough so that $\lambda_{\Bdot,\z}^i(p)$ is defined, then $\lambda_{\Adot,\z}^i(p)$ and $\lambda_{\Cdot,\z}^i(p)$ are defined, and
$$
\lambda_{\Bdot,\z}^i(p) = \lambda_{\Adot,\z}^i(p) + \lambda_{\Cdot,\z}^i(p).
$$
(See Proposition 3.3 of \cite{numinvar}.)
\end{rem}



\begin{lem}\label{lem:IPAimplies1}
If $\pi$ is a one-parameter unfolding (with parameter $t$) of a parameterization of $V(f,t)$ with isolated instability at the origin, then the $0$-dimensional characteristic polar multiplicity of $\Ndot_{V(f)}$ with respect to $t$ is defined, and
\[
\lambda_{\Ndot_{V(f)},t}^0(\0) = \lambda_{\Z_{V(f)}^\bullet[n],t}^0(\0) = \left (\Gamma_{f,t}^1 \cdot V(t) \right )_\0.
\]
\end{lem}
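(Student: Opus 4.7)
The plan is to break the stated identity into two equalities, each obtained from a short exact sequence of perverse sheaves combined with additivity of characteristic polar multiplicities (\remref{rem:charpolcycle}). First, by \propref{prop:IPAexistence}, the isolated instability hypothesis makes $f$ an IPA-deformation of $f_{|_{V(t)}}$ at $\0$, so by condition (6) of \propref{prop:IPAequiv} we have $\dim_\0 \supp \phi_t[-1]\Z_{V(f)}^\bullet[n] \leq 0$. Applying the perverse-exact functor $\phi_t[-1]$ to the defining short exact sequence
$$0 \to \Ndot_{V(f)} \to \Z_{V(f)}^\bullet[n] \to \ic_{V(f)} \to 0,$$
the supports of $\phi_t[-1]\Ndot_{V(f)}$ and $\phi_t[-1]\ic_{V(f)}$ near $\0$ are thereby also $0$-dimensional, so $\lambda_{\Ndot_{V(f)},t}^0(\0)$ and $\lambda_{\ic_{V(f)},t}^0(\0)$ are both defined, and additivity gives
$$\lambda_{\Z_{V(f)}^\bullet[n],t}^0(\0) = \lambda_{\Ndot_{V(f)},t}^0(\0) + \lambda_{\ic_{V(f)},t}^0(\0).$$

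To obtain the first equality of the lemma, I would show $\lambda_{\ic_{V(f)},t}^0(\0) = 0$. The key point is that $\ic_{V(f)}$ is controlled by the smooth (respectively $\Q$-homology manifold) normalization $\pi : Y = \D \times \wt{V(f_0)} \to V(f)$, and $t \circ \pi$ is simply projection onto the smooth unfolding factor $\D$. Since $d(t\circ\pi)$ never vanishes on $Y$, \propref{prop:microsupport} forces $\phi_{t\circ\pi}[-1]\Z_Y^\bullet[n] = 0$; descending along the finite map $\pi$ via proper base change for vanishing cycles then yields $\phi_t[-1]\ic_{V(f)} = 0$ in a neighborhood of $\0$, whence $\lambda_{\ic_{V(f)},t}^0(\0) = 0$.

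For the second equality, apply additivity to the companion short exact sequence
$$0 \to \Z_{V(f)}^\bullet[n] \to \psi_f[-1]\Z_\U^\bullet[n+1] \to \phi_f[-1]\Z_\U^\bullet[n+1] \to 0$$
to obtain $\lambda_{\Z_{V(f)}^\bullet[n],t}^0(\0) = \lambda_{\psi_f[-1]\Z_\U^\bullet[n+1],t}^0(\0) - \lambda_{\phi_f[-1]\Z_\U^\bullet[n+1],t}^0(\0)$, where the subtrahend equals the L\^e number $\lambda_{f,t}^0(\0)$ by \exref{exm:lecharmult}. The first term, via the intersection-theoretic description of \remref{rem:charpolcycleeuler} together with the identification $SS(\psi_f[-1]\Z_\U^\bullet[n+1]) = \overline{T_f^*\U} \cap (f \circ \tau)^{-1}(0)$ invoked in the proof of \propref{prop:IPAequiv}, equals the proper intersection multiplicity $(\overline{T_f^*\U} \cdot \im dt)_{(\0, d_\0 t)}$ in $T^*\U$ (properness guaranteed by the IPA condition). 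Pushing forward along $\tau$ and intersecting with $V(t)$, the cycle $\tau_*(\overline{T_f^*\U} \cdot \im dt)$ decomposes into components not contained in $\Sigma f$, which form $\Gamma_{f,t}^1$ by definition and contribute $(\Gamma_{f,t}^1 \cdot V(t))_\0$, plus components contained in $\Sigma f$ whose total contribution cancels $\lambda_{f,t}^0(\0)$ by a L\^e-attaching-type argument in the spirit of \propref{prop:basic}. The main obstacle will be this last bookkeeping of multiplicities along components supported in $\Sigma f$; it is essentially an instance of L\^e's attaching formula from \cite{lecycles}, requiring care with intersection-theoretic multiplicities but no new conceptual ingredient.
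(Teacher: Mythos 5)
Your argument for the first equality is essentially the paper's own proof: both use the identification of the cokernel of $\Ndot_{V(f)} \hookrightarrow \Z_{V(f)}^\bullet[n]$ with $\pi_*\Z_{\D\times\wt{V(f_0)}}^\bullet[n]$, proper base change to get $\phi_t[-1]\pi_* \cong \hat\pi_*\phi_{t\circ\pi}[-1]$, the vanishing $\phi_{t\circ\pi}=0$ because $t\circ\pi$ is a submersion on the unfolding parameter, and additivity of characteristic polar multiplicities on the resulting short exact sequence. That part is correct and matches the paper.

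Where you diverge is the second equality $\lambda_{\Z_{V(f)}^\bullet[n],t}^0(\0) = (\Gamma_{f,t}^1\cdot V(t))_\0$. The paper does not prove this; it cites it as a classical result of L\^e, Hamm, Teissier, and Siersma, with the IPA-generality taken from \cite{gencalcvan}. Your sketch via the sequence $0\to\Z_{V(f)}^\bullet[n]\to\psi_f[-1]\Z_\U^\bullet[n+1]\to\phi_f[-1]\Z_\U^\bullet[n+1]\to 0$ has two soft spots. First, identifying $\lambda_{\psi_f[-1]\Z_\U^\bullet[n+1],t}^0(\0)$ with an intersection number in $T^*\U$ requires the \emph{characteristic cycle} of $\psi_f[-1]\Z_\U^\bullet[n+1]$ (with its multiplicities), not merely the microsupport as a set; the equality $SS(\psi_f[-1]\Z_\U^\bullet[n+1]) = \overline{T_f^*\U}\cap(f\circ\tau)^{-1}(0)$ invoked in \propref{prop:IPAequiv} does not by itself give you the multiplicities you need, and note also that this set is $(n+1)$-dimensional, so the relevant cycle is $\overline{T_f^*\U}\cdot(f\circ\tau)^{-1}(0)$ rather than $\overline{T_f^*\U}$ itself. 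Second, the claimed cancellation of the components supported over $\Sigma f$ against $\lambda_{f,t}^0(\0)$ is not bookkeeping around a known core --- it \emph{is} the content of the L\^e attaching formula, so as written your argument for the second equality is circular: it defers to the very statement being proved. None of this is fatal, since the correct move (and the paper's move) is simply to cite the classical result; but you should not present that portion as a proof.
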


\begin{proof}
If $f$ is an IPA-deformation of $f_{|_{V(t)}}$ at $\0$, then $\dim_\0 \supp \phi_t[-1]\Z_{V(f)}^\bullet[n] \leq 0$, by Proposition~\ref{prop:IPAequiv}. By Definition~\ref{def:charpol}, this is precisely what is needed to define $\lambda_{\Z_{V(f)}^\bullet[n],t}^0(\0)$. Then, by a proper base-change, we have $\phi_t \pi_* \cong \hat{\pi}_*\phi_{t \circ \pi}$, where $\hat{\pi} : V(t \circ \pi) \to V(f,t)$ is the pullback of $\pi$ via the inclusion $V(f,t) \hookrightarrow V(f)$. But, because $\pi$ is a one-parameter unfolding, $t \circ \pi$ is a linear form on affine space and has no critical points; hence, $\phi_{t \circ \pi}\Z_\U^\bullet = 0$. 


Consequently, it follows from the short exact sequence of perverse sheaves
\[
0 \to \phi_t[-1]\Ndot_{V(f)} \to \phi_t[-1]\Z_{V(f)}^\bullet[n] \to \phi_t[-1]\pi_*\Z_{\D \times \wt X}^\bullet[n] \to 0
\]
that there is an equality $\lambda_{\Ndot_{V(f)},t}^0(\0) = \lambda_{\Z_{V(f)}^\bullet[n],t}^0(\0)$, since the characteristic polar multiplicities are additive on short exact sequences. 

It is then a classical result by L\^e, Hamm, Teissier, and Siersma that, for sufficiently generic $t$, 
\[
\lambda_{\Z_{V(f)}^\bullet[n],t}^0(\0) = \left (\Gamma_{f,t}^1 \cdot V(t) \right )_\0;
\]
the result in the generality of IPA-deformations is found in \cite{gencalcvan}. The claim follows.
\end{proof}

\begin{rem}
The unfolding condition is not needed for the characteristic polar multiplicities of $\Ndot_{V(f)}$ to be defined, but it \textbf{is needed} for the vanishing $\lambda_{\pi_*\Z_{\D \times \wt {V(f_0)}}^\bullet[n],t}^0(\0) = 0$ which yields the equalities of \lemref{lem:IPAimplies1}.
\end{rem}

\begin{exm}\label{exm:ndotwhitneyumbrella}
Let us compute $\lambda_{\Ndot_{V(f)},t}^0(\0)$ in the case where $V(f)$ is the Whitney umbrella, with defining function $f(x,y,t) = y^2-x^3-tx^2$. Then, we can realize $V(f)$ as the total space of the one-parameter unfolding $\pi(t,u) = (u^2-t,u(u^2-t),t)$ with parameter $t$, and \lemref{lem:IPAimplies1} tells us that $\lambda_{\Ndot_{V(f)},t}^0(\0)$ is equal to the intersection multiplicity $\left (\Gamma_{f,t}^1 \cdot V(t) \right )_\0$. A quick computation tells us that the relative polar curve $\Gamma_{f,t}^1$ is equal to $V(3x+2t,y)$, and thus transversely intersects $V(t)$ at $\0$. Hence,
$$
\lambda_{\Ndot_{V(f)},t}^0(\0) = \left ( \Gamma_{f,t}^1 \cdot V(t) \right )_\0 = 1.
$$
\end{exm}

The iterated IPA-condition implies the higher characteristic polar multiplicities of $\Ndot_{V(f)}$ exist as well. 
\begin{thm}\label{thm:IPAimplies2}
Suppose that $(t,\z) = (t,z_1,\cdots,z_n)$ is an IPA-tuple for $g$ at $\0$.  Then, for $0 \leq i \leq n-1$, the characteristic polar multiplicities $\lambda_{\Ndot_{V(f)},(t,\z)}^i(\0)$ with respect to $(t,\z)$ are defined, and the following equalities hold:
\[
\lambda_{\Ndot_{V(f_0)},\z}^0(\0) = \lambda_{\Ndot_{V(f)},(t,\z)}^1(\0) - \lambda_{\Ndot_{V(f)},(t,\z)}^0(\0),
\]
and, for $1 \leq i \leq n-2$, 
\[
\lambda_{\Ndot_{V(f_0)},\z}^i(\0) = \lambda_{\Ndot_{V(f)},(t,\z)}^{i+1}(\0).
\]
\end{thm}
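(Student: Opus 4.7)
The plan is to reduce the theorem to a single short exact sequence of perverse sheaves on $V(f_0)$ obtained by slicing $\Ndot_{V(f)}$ by $t$, then to apply the compositions of nearby and vanishing cycle functors required by Definition~\ref{def:charpol} and invoke the additivity of characteristic polar multiplicities from Remark~\ref{rem:charpolcycle}. I would first verify well-definedness of every multiplicity in the statement. Because $\Ndot_{V(f)}$ is a perverse subobject of $\Z_{V(f)}^\bullet[n]$ and all shifted nearby/vanishing cycle functors are perverse t-exact, iterating $\phi_{z_{j}}[-1]\psi_{z_{j-1}}[-1]\cdots\psi_{t}[-1]$ preserves the inclusion at every stage, so the support of the result for $\Ndot_{V(f)}$ is contained in that of the result for $\Z_{V(f)}^\bullet[n]$. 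Iterating Proposition~\ref{prop:IPAequiv}(6)-(7) along the IPA-tuple $(t,z_1,\ldots,z_n)$ caps the latter support at $\{\0\}$ near the origin, so every $\lambda_{\Ndot_{V(f)},(t,\z)}^{j}(\0)$ with $0 \leq j \leq n-1$ is defined. The parallel argument, using that $(z_1,\ldots,z_n)$ is itself an IPA-tuple for $f_0$, handles $\Ndot_{V(f_0)}$ and shows in particular that $\phi_t[-1]\Ndot_{V(f)}$ is a perverse skyscraper at $\{\0\}$.

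Next I would establish the short exact sequence of perverse sheaves on $V(f_0)$,
\[
0 \to \Ndot_{V(f_0)} \to \psi_t[-1]\Ndot_{V(f)} \to \phi_t[-1]\Ndot_{V(f)} \to 0.
\]
Combining the standard distinguished triangle $(\Ndot_{V(f)})_{|_{V(t)}}[-1] \to \psi_t[-1]\Ndot_{V(f)} \to \phi_t[-1]\Ndot_{V(f)} \xrightarrow{+1}$ with the identification $(\Ndot_{V(f)})_{|_{V(t)}}[-1] \cong \Ndot_{V(f_0)}$ of Remark~\ref{rem:ndotslicing} at $t_0=0$ forces the left-hand term to be perverse, so the triangle promotes to a SES in the perverse heart.

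Given this SES, the two cases of the theorem fall out. For $i \geq 1$, I apply $\phi_{z_{i+1}}[-1]\psi_{z_i}[-1]\cdots\psi_{z_1}[-1]$ to the SES; because $\phi_t[-1]\Ndot_{V(f)}$ is supported only at $\{\0\}$ and the generic fiber of $z_1$ misses $\{\0\}$, the innermost $\psi_{z_1}[-1]$ annihilates the third term. The remaining two terms then become isomorphic, and taking $\rank_{\Z} H^{0}$ at $\0$ gives $\lambda_{\Ndot_{V(f_0)},\z}^{i}(\0) = \lambda_{\Ndot_{V(f)},(t,\z)}^{i+1}(\0)$. For $i=0$, I apply only $\phi_{z_1}[-1]$ to the SES; the triangle for $\phi_{z_1}$ acting on the point skyscraper $\phi_t[-1]\Ndot_{V(f)}$ forces $\phi_{z_1}[-1]\phi_t[-1]\Ndot_{V(f)} \cong \phi_t[-1]\Ndot_{V(f)}$ (still a skyscraper at $\0$ with the same stalk), whose rank of $H^0$ at $\0$ equals $\lambda_{\Ndot_{V(f)},(t,\z)}^{0}(\0)$. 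Additivity (Remark~\ref{rem:charpolcycle}) then yields
\[
\lambda_{\Ndot_{V(f)},(t,\z)}^{1}(\0) = \lambda_{\Ndot_{V(f_0)},\z}^{0}(\0) + \lambda_{\Ndot_{V(f)},(t,\z)}^{0}(\0),
\]
which rearranges to the claimed formula.

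The main technical obstacle will be justifying the restriction isomorphism $(\Ndot_{V(f)})_{|_{V(t)}}[-1] \cong \Ndot_{V(f_0)}$ at the central fiber $t_0=0$: Remark~\ref{rem:ndotslicing} asserts this uniformly for small $t_0$, but the extension to $t_0=0$ relies on the unfolding structure to ensure that $V(t)$ transversally meets the strata of $V(f)$ off $\0$, so that the kernel defining $\Ndot_{V(f)}$ restricts compatibly to the kernel defining $\Ndot_{V(f_0)}$ via the analogous identifications for $\Z_{V(f)}^\bullet[n]$ and $\Idot_{V(f)}$. Once this identification and the SES are in hand, the rest of the argument is formal t-exactness plus additivity.
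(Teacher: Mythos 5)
Your proposal is correct and follows essentially the same route as the paper: well-definedness via the perverse-subobject inclusion $\Ndot_{V(f)} \hookrightarrow \Z_{V(f)}^\bullet[n]$ together with the iterated comparison isomorphisms from Proposition~\ref{prop:IPAequiv}, and the numerical equalities via slicing by $t$. The only difference is that you derive the slicing relations explicitly from the short exact sequence $0 \to \Ndot_{V(f_0)} \to \psi_t[-1]\Ndot_{V(f)} \to \phi_t[-1]\Ndot_{V(f)} \to 0$ and the skyscraper property of $\phi_t[-1]\Ndot_{V(f)}$, whereas the paper delegates that step to Proposition 3.2 of \cite{numinvar}; your unpacking of that black box is sound.
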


\begin{proof}
That $\lambda_{\Ndot_{V(f)},(t,\z)}^0(\0)$ is defined is precisely the inequality $\dim_\0 \supp \phi_t[-1]\Ndot_{V(f)} \leq 0$ concluded in Lemma~\ref{lem:IPAimplies1} from the inclusion of perverse sheaves
\[
0 \to \phi_t[-1]\Ndot_{V(f)} \to \phi_t[-1]\Z_{V(f)}^\bullet[n] 
\]
By Proposition 3.2 of \cite{numinvar}, it remains to show that $\lambda_{\Ndot_{V(f)},(t,\z)}^i(\0)$ is defined for $1 \leq i \leq n-1$, i.e., we need to show that
\[
\dim_\0 \supp \phi_{z_{i-1}}[-1]\psi_{z_{i-2}}[-1]\cdots \psi_{z_1}[-1]\psi_t[-1]\Ndot_{V(f)} \leq 0.
\]
From the inclusion of perverse sheaves
\[
0 \to \Ndot_{V(f)} \to \Z_{V(f)}^\bullet[n],
\]
it follows that $\lambda_{\Ndot_{V(f)},(t,\z)}^i(\0)$ is defined if $\lambda_{\Z_{V(f)}^\bullet[n],(t,\z)}^i(\0)$ is defined, by the triangle inequality for supports of perverse sheaves. 

Since $(t,\z)$ is an IPA-tuple for $f$ at $\0$, Proposition~\ref{prop:IPAequiv} gives, for $1 \leq i \leq n-1$,
\[
\dim_\0 \supp \phi_{z_i}[-1]\Z_{V(f,t,z_1,\cdots,z_{i-1})}^\bullet[n-i] \leq 0.
\]
Thus, away from $\0$, each of the comparison morphisms
\begin{align*}
\Z_{V(f,t,z_1,\cdots,z_{i-1},z_i)}^\bullet[n-i-1] \overset{\sim}{\to} \psi_{z_i}[-1]\Z_{V(f,t,z_1,\cdots,z_{i-1})}^\bullet[n-i]
\end{align*}
is an isomorphism for $1 \leq i \leq n-1$. Consequently, 
\[
\dim_\0 \supp \phi_{z_i}[-1]\Z_{V(f,t,z_1,\cdots,z_{i-1})}^\bullet[n-i] \leq 0
\]
implies 
\[
\dim_\0 \supp \phi_{z_{i-1}}[-1]\psi_{z_{i-2}}[-1]\cdots \psi_{z_1}[-1]\psi_t[-1]\Z_{V(f)}^\bullet[n] \leq 0,
\]
and the claim follows.

\end{proof}

\begin{rem}
In the wake of a recent result \cite{comparison} by David Massey, we can obtain a much simpler proof of the above result; one has the identification $\Ndot_{V(f)} \cong \ker \{ \Id-\wt T_f\}$ for hypersurfaces, where $\wt T_f$ is the Milnor monodromy automorphism on the vanishing cycles $\phi_f[-1]\Z_\U^\bullet[n+1]$, with the kernel being taken in the category of perverse sheaves on $V(f)$. Consequently, $\Ndot_{V(f)}$ is a perverse subobject of the vanishing cycles $\phi_f[-1]\Z_\U^\bullet[n+1]$, and we obtain \thmref{thm:IPAimplies2} by either the triangle inequality for microsupports, or the fact that characteristic polar multiplicities are additive on short exact sequences \cite{numinvar} from the fact that $\supp \Ndot_{V(f)} \subseteq \supp \phi_f[-1]\Z_\U^\bullet[n+1] = V(f) \cap \Sigma f$.

We still wish to include our original proof of \thmref{thm:IPAimplies2}, since the methods used provide good intuition for how one uses IPA-deformations cohomologically to ``move around" the origin.
\end{rem}

\section{Milnor's Result and Beyond}\label{sec:surface}

We wish to express the L\^{e} numbers of $f_0$ entirely in terms of data from the L\^{e} numbers of $f_{t_0}$ and the characteristic polar multiplicities of both $\Ndot_{V(f_0)}$ and $\Ndot_{V(f_{t_0})}$, for $t_0$ small and nonzero. The starting point is Proposition~\ref{prop:basic}:
\begin{align*}
\lambda_{f_0,\z}^0(\0) &= \left ( \Gamma_{f,t}^1 \cdot V(t) \right )_\0 + \lambda_{f,(t,\z)}^1(\0) \\
\lambda_{f_{t_0},\z}^i(\0) &= \lambda_{f,(t,\z)}^{i+1}(\0),
\end{align*}
where $(t,\z) = (t,z_1,\cdots,z_n)$ is an IPA-tuple for $f$ at $\0$.  From Lemma~\ref{lem:IPAimplies1}, we have $\left ( \Gamma_{f,t}^1 \cdot V(t) \right )_\0 = \lambda_{\Ndot_{V(f)},(t,\z)}^0(\0)$; we now have all our relevant data in terms of L\^{e} numbers and characteristic polar multiplicities of $\Ndot_{V(f)}$.   

\subsection{Main Result}\label{subsec:mainresult}

The goal is then to decompose this data into numerical invariants which refer only to the $t=0$ and $t\neq 0$ slices of $V(f)$. So, in order to realize this goal, the next step is to decompose $\lambda_{\Ndot_{V(f)},(t,\z)}^0(\0)$ and $\lambda_{f,(t,\z)}^i(\0)$ for $i \geq 1$.

The 1-dimensional L\^{e} number $\lambda_{f,(t,\z)}^1(\0)$ is the easiest; by the dynamic intersection property for proper intersections,
\begin{align*}
\lambda_{f,(t,\z)}^1(\0) &= \left (\Lambda_{f,(t,\z)}^1 \cdot V(t) \right )_\0 \\
&= \sum_{p \in B_\epsilon \cap V(t-t_0)} \left ( \Lambda_{f,(t,\z)}^1 \cdot V(t-t_0) \right )_p \\
&= \sum_{p \in B_\epsilon \cap V(t-t_0)} \lambda_{f_{t_0},\z}^0(p).
\end{align*}


The approach for $\lambda_{f_0,\z}^i(\0)$ for $i \geq 1$ is similar: we will use the fact that $f$ is an IPA-deformation of $f_0$ to ``move" around the origin in the $V(t)$ slice, and then use the dynamic intersection property. 

\begin{prop}\label{prop:lambda1}
If $(t,\z) = (t,z_1,\cdots,z_i)$ is an IPA-tuple for $f$ at $\0$ for $i \geq 1$, the following equality of intersection numbers holds:
\begin{align*}
\lambda_{f_0,\z}^i(\0)  = \sum_{q \in B_\epsilon \cap V(t-t_0,z_1,z_2,\cdots,z_i)} \lambda_{f_{t_0},\z}^i(q) 
\end{align*}
where  $0< |t_0| \ll \epsilon \ll 1$
\end{prop}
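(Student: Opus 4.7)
The plan is to reduce to Proposition~\ref{prop:basic} together with the dynamic intersection property for proper intersections recorded in Remark~\ref{rem:charpolcycle}. Since $(t,\z)$ is an IPA-tuple for $f$ at $\0$, Proposition~\ref{prop:basic} applied with $z_0 = t$ immediately gives
$$\lambda_{f_0,\z}^i(\0) = \lambda_{f,(t,\z)}^{i+1}(\0) \qquad (i \geq 1),$$
so it suffices to decompose the right-hand side into local contributions on the slice $V(t-t_0)$.

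Writing the $(i{+}1)$-dimensional L\^e number as an intersection multiplicity with its L\^e cycle,
$$\lambda_{f,(t,\z)}^{i+1}(\0) = \bigl(\Lambda_{f,(t,\z)}^{i+1} \cdot V(t, z_1,\ldots, z_i)\bigr)_\0,$$
the IPA hypothesis guarantees that this intersection is proper. By the dynamic intersection property, for $0 < |t_0| \ll \epsilon \ll 1$,
$$\lambda_{f,(t,\z)}^{i+1}(\0) = \sum_{q \in B_\epsilon \cap V(t - t_0,\, z_1,\ldots, z_i)} \bigl(\Lambda_{f,(t,\z)}^{i+1} \cdot V(t - t_0,\, z_1,\ldots, z_i)\bigr)_q.$$
The key observation for identifying each local contribution is that $\im\, dt = \im\, d(t - t_0)$ as subsets of $T^*\U$, so the relative polar varieties, and hence the L\^e cycles, are insensitive to the substitution $t \leftrightarrow t - t_0$; in particular $\Lambda_{f,(t,\z)}^{i+1} = \Lambda_{f,(t-t_0,\z)}^{i+1}$. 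A second application of Proposition~\ref{prop:basic}, now at $q$ with the role of $z_0$ played by $t - t_0$, then gives
$$\bigl(\Lambda_{f,(t,\z)}^{i+1} \cdot V(t - t_0,\, z_1,\ldots, z_i)\bigr)_q = \lambda_{f,(t-t_0,\z)}^{i+1}(q) = \lambda_{f_{t_0},\z}^i(q),$$
which combined with the previous displays is the desired equality.

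The main obstacle is precisely the hypothesis needed for this second application of Proposition~\ref{prop:basic}: we must check that $(t - t_0, \z)$ is an IPA-tuple for $f$ at each relevant $q$, not merely at $\0$. This should follow from the openness of the IPA condition in the cotangent bundle (item (3) of Proposition~\ref{prop:IPAequiv}): the properness at $(\0, d_\0 t)$ of the intersection $\im\, dt \cap (f\circ\tau)^{-1}(0) \cap \overline{T^*_f\U}$ persists at nearby covectors $(q, d_q t)$ by upper semicontinuity of fiber dimension, and the successive conditions coming from the iterated slices $V(z_1), V(z_1, z_2),\ldots$ are handled similarly after shrinking $\epsilon$ and $|t_0|$ as necessary.
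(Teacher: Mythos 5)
Your proposal is correct and follows essentially the same route as the paper: reduce to $\lambda_{f,(t,\z)}^{i+1}(\0)$ via Proposition~\ref{prop:basic}, expand by the dynamic intersection property, and identify the local contributions at each $q$ with $\lambda_{f_{t_0},\z}^i(q)$. The only cosmetic difference is in the last step, where the paper simply invokes the cycle equality $\Lambda_{f,(t,\z)}^{i+1}\cdot V(t-t_0)=\Lambda_{f_{t_0},\z}^{i}$ coming from the unfolding structure, whereas you re-apply Proposition~\ref{prop:basic} at each $q$ and justify the needed IPA condition there by openness of the microsupport condition --- both are fine.
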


\begin{proof}
First, recall that $\lambda_{f_0,\z}^i(\0) = \left ( \Lambda_{f_0,\z}^i \cdot V(z_1,\cdots,z_i) \right )_\0$, where $\Lambda_{f_0,\z}^i$ is the $i$-dimensional L\^{e} cycle of $f_0$ with respect to $\z$ (see \secref{sec:classicalcycles}, as well as \cite{lecycles}).  For $i \geq 1$, we have 
\[
\Lambda_{f_0,\z}^i = \Lambda_{f,(t,\z)}^{i+1} \cdot V(t),
\]
so, by the dynamic intersection property,

\begin{align*}
\lambda_{f_0,\z}^i(\0) &=  \left ( \Lambda_{f,(t,\z)}^{i+1} \cdot V(t,z_1,\cdots,z_i) \right )_\0 \\
&=\sum_{q \in B_\epsilon \cap V(t-t_0,z_1,z_2,\cdots,z_i)} \left (\Lambda_{f,(t,\z)}^{i+1} \cdot V(t-t_0,z_1,z_2,\cdots,z_i) \right )_q \\
&= \sum_{q \in B_\epsilon \cap V(t-t_0,z_1,z_2,\cdots,z_i)} \left (\Lambda_{f_{t_0},\z}^{i} \cdot V(z_1,z_2,\cdots,z_i) \right )_q \\
&= \sum_{q \in B_\epsilon \cap V(t-t_0,z_1,z_2,\cdots,z_i)} \lambda_{f_{t_0},\z}^i(\0),
\end{align*}
where the second equality follows from the equality of cycles $\Lambda_{f,\z}^{i+1} \cdot V(t-t_0) = \Lambda_{f_{t_0},\z}^i$.
\end{proof}


We can now state and prove our main result.

\begin{thm}\label{thm:main}
Suppose that $\pi : (\D \times \wt {V(f_0)}, \{0\} \times S) \to (V(f),\0)$ is a one-parameter unfolding with an isolated instability of a parameterized hypersurface $\im \pi_0 = V(f_0)$.  Suppose further that $\z = (z_1,\cdots,z_n)$ is chosen such that $\z$ is an IPA-tuple for $f_0 = f_{|_{V(t)}}$ at $\0$. Then, the following formulas hold for the L\^{e} numbers of $f_0$ with respect to $\z$ at $\0$: for $0 < |t_0|  \ll \epsilon \ll 1$,
\begin{align*}
\lambda_{f_0,\z}^0(\0) &= -\lambda_{\Ndot_{V(f_0)},\z}^0(\0) + \sum_{p \in B_\epsilon \cap V(t-t_0)} \left ( \lambda_{{f_{t_0}},\z}^0(p) + \lambda_{\Ndot_{V(f_{t_0})},\z}^0(p) \right ), 
\end{align*}
and, for $1 \leq i \leq n-2$,
\begin{align*}
\lambda_{f_0,\z}^i(\0) &= \sum_{q \in B_\epsilon \cap V(t-t_0,z_1,z_2,\cdots,z_i)} \lambda_{f_{t_0},\z}^{i}(q).
\end{align*}

In particular, the following relationship holds for $0 \leq i \leq n-2$:
\begin{align*}
\lambda_{f_0,\z}^i(\0) +\lambda_{\Ndot_{V(f_0)},\z}^i(\0) =   \sum_{p \in B_\epsilon \cap V(t-t_0,z_1,z_2,\cdots,z_i)} \left ( \lambda_{{f_{t_0}},\z}^i(p) + \lambda_{\Ndot_{V(f_{t_0})},\z}^i(p) \right ) 
\end{align*}

\end{thm}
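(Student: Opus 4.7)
The proof naturally splits along the range of $i$: the cases $1 \leq i \leq n-2$ are already essentially done in \propref{prop:lambda1}, so the novelty lies in the $i = 0$ case, which will combine the L\^e-number slicing of \propref{prop:basic}, the characteristic polar multiplicity identity of \lemref{lem:IPAimplies1}, and the slicing theorem \thmref{thm:IPAimplies2} for $\Ndot_{V(f)}$. The plan is to begin by observing that, since $\pi$ is a one-parameter unfolding with isolated instability, \propref{prop:IPAexistence} guarantees that $(t,\z) = (t,z_1,\dots,z_n)$ is an IPA-tuple for $f$ at $\0$, so all the machinery of \secref{sec:IPA} and \secref{sec:lenums} applies. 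In particular, \propref{prop:basic} is available and yields the starting equalities
\begin{align*}
\lambda_{f_0,\z}^0(\0) &= \bigl(\Gamma_{f,t}^1 \cdot V(t)\bigr)_\0 + \lambda_{f,(t,\z)}^1(\0), \\
\lambda_{f_0,\z}^i(\0) &= \lambda_{f,(t,\z)}^{i+1}(\0), \qquad 1 \leq i \leq n-2.
\end{align*}

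For $1 \leq i \leq n-2$ the formula $\lambda_{f_0,\z}^i(\0) = \sum_q \lambda_{f_{t_0},\z}^i(q)$ is precisely the conclusion of \propref{prop:lambda1}, so that case requires no additional work. For the $i=0$ formula, I would first rewrite the intersection number $(\Gamma_{f,t}^1 \cdot V(t))_\0$ using \lemref{lem:IPAimplies1} as $\lambda_{\Ndot_{V(f)},(t,\z)}^0(\0)$; then apply \thmref{thm:IPAimplies2} to replace it by $\lambda_{\Ndot_{V(f)},(t,\z)}^1(\0) - \lambda_{\Ndot_{V(f_0)},\z}^0(\0)$. Simultaneously, I would handle $\lambda_{f,(t,\z)}^1(\0)$ by expressing it as the intersection $(\Lambda_{f,(t,\z)}^1 \cdot V(t))_\0$ and invoking the dynamic intersection property together with the cycle-level slicing identity $\Lambda_{f,(t,\z)}^1 \cdot V(t-t_0) = \Lambda_{f_{t_0},\z}^0$ to obtain $\sum_{p} \lambda_{f_{t_0},\z}^0(p)$. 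Applying the same dynamic intersection argument to the 1-dimensional characteristic polar cycle $\Lambda_{\Ndot_{V(f)},(t,\z)}^1$ and using the sheaf-theoretic slicing $\Ndot_{V(f)}|_{V(t-t_0)}[-1] \cong \Ndot_{V(f_{t_0})}$ of \remref{rem:ndotslicing} to identify the resulting summand as $\lambda_{\Ndot_{V(f_{t_0})},\z}^0(p)$ assembles everything into the desired $i=0$ formula.

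For the combined identity at $1 \leq i \leq n-2$, the task is to add $\lambda_{\Ndot_{V(f_0)},\z}^i(\0)$ to both sides of the second formula and show the $\Ndot$ terms also satisfy a dynamic intersection identity
$$
\lambda_{\Ndot_{V(f_0)},\z}^i(\0) = \sum_{p \in B_\epsilon \cap V(t-t_0,z_1,\dots,z_i)} \lambda_{\Ndot_{V(f_{t_0})},\z}^i(p).
$$
This will be proved by exactly the same template as \propref{prop:lambda1}: use \thmref{thm:IPAimplies2} to rewrite $\lambda_{\Ndot_{V(f_0)},\z}^i(\0) = \lambda_{\Ndot_{V(f)},(t,\z)}^{i+1}(\0)$, express the latter as an intersection with $V(t,z_1,\dots,z_i)$ via the characteristic polar cycle $\Lambda_{\Ndot_{V(f)},(t,\z)}^{i+1}$ (see \remref{rem:charpolcycleeuler}), and apply dynamic intersection plus the slicing isomorphism for $\Ndot$.

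The main obstacle I anticipate is establishing the cycle-level slicing identity $\Lambda_{\Ndot_{V(f)},(t,\z)}^{i+1} \cdot V(t-t_0) = \Lambda_{\Ndot_{V(f_{t_0})},\z}^i$ rigorously; this is the $\Ndot$-analogue of the well-known L\^e cycle slicing identity, and it is the point at which the sheaf-theoretic content (the isomorphism $\Ndot_{V(f)}|_{V(t-t_0)}[-1] \cong \Ndot_{V(f_{t_0})}$) must be translated into an equality of analytic cycles. Once properness of the relevant intersections is checked using the IPA hypothesis (so that the dynamic intersection property applies), the formula should follow from the fact that characteristic polar cycles are determined by the constructible function $\chi(\Ndot)_p$ on stalks, which is preserved under the slicing by $V(t-t_0)$ for generic $t_0$.
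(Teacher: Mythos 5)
Your proposal follows essentially the same route as the paper's proof: reduce via \propref{prop:basic} and \propref{prop:lambda1} to the identity for $\lambda_{\Ndot_{V(f)},(t,\z)}^0(\0)$, rewrite it using \lemref{lem:IPAimplies1} and \thmref{thm:IPAimplies2}, and finish with the dynamic intersection property applied to $\Lambda_{\Ndot_{V(f)},(t,\z)}^1$ together with the slicing isomorphism of \remref{rem:ndotslicing}. The cycle-level slicing identity you flag as the main obstacle is exactly the point the paper handles by combining \remref{rem:ndotslicing} with the intersection-theoretic description of characteristic polar cycles in \remref{rem:charpolcycleeuler}, so your plan is complete as stated.
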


\begin{proof}
By Proposition~\ref{prop:basic} and Proposition~\ref{prop:lambda1}, it suffices to prove
\begin{align}\label{eqn:ndotdeformation}
\lambda_{\Ndot_{V(f)},(t,\z)}^0(\0) = -\lambda_{\Ndot_{V(f_0)},\z}^0(\0) + \sum_{p \in B_\epsilon \cap V(t-t_0)} \lambda_{\Ndot_{V(f_{t_0})},\z}^0(p).
\end{align}
Since $(t,\z)$ is an IPA-tuple for $f$ at $\0$, Theorem~\ref{thm:IPAimplies2} yields
\begin{align*}
\lambda_{\Ndot_{V(f_0)},\z}^0(\0) = \lambda_{\Ndot_{V(f)},(t,\z)}^1(\0) - \lambda_{\Ndot_{V(f)},(t,\z)}^0(\0),
\end{align*}
where $\Ndot_{V(f_0)} \cong {\Ndot_{V(f)}}_{|_{V(t)}}[-1]$ (cf. Remark~\ref{rem:ndotslicing}).

The main claim then follows by the dynamic intersection property for proper intersections applied to $\Lambda_{\Ndot_{V(f)},(t,\z)}^1$ (see Remark~\ref{rem:charpolcycle}):
\begin{align*}
\lambda_{\Ndot_{V(f)},(t,\z)}^1(\0) &= \left (\Lambda_{\Ndot_{V(f)},(t,\z)}^1 \cdot V(t) \right )_\0 \\ 
&= \sum_{p \in B_\epsilon \cap V(t-t_0)} \left (\Lambda_{\Ndot_{V(f)},(t,\z)}^1 \cdot V(t-t_0) \right )_p \\
&= \sum_{p \in B_\epsilon \cap V(t-t_0)} \lambda_{\Ndot_{V(f_{t_0})},\z}^0(p),
\end{align*}
for $0 < |t_0| \ll \epsilon \ll 1$.


Finally, we examine the relationship
\begin{align*}
\lambda_{f_0,\z}^i(\0) +\lambda_{\Ndot_{V(f_0)},\z}^i(\0) =   \sum_{p \in B_\epsilon \cap V(t-t_0,z_1,z_2,\cdots,z_i)} \left ( \lambda_{{f_{t_0}},\z}^i(p) + \lambda_{\Ndot_{V(f_{t_0})},\z}^i(p) \right ).
\end{align*}
For $i=0$, this follows by a trivial rearrangement of the terms in our expression for $\lambda_{f_0,\z}^0(\0)$. For $i \geq 1$, this is just Proposition~\ref{prop:lambda1} combined with Theorem~\ref{thm:IPAimplies2} and the dynamic intersection property on $\lambda_{\Ndot_{V(f)},(t,\z)}^i(\0)$, as in Proposition~\ref{prop:lambda1} for $\lambda_{f,(t,\z)}^i(\0)$.
\end{proof}

\begin{rem}
The relationship 
\begin{align*}
\lambda_{f_0,\z}^i(\0) +\lambda_{\Ndot_{V(f_0)},\z}^i(\0) =   \sum_{p \in B_\epsilon \cap V(t-t_0,z_1,z_2,\cdots,z_i)} \left ( \lambda_{{f_{t_0}},\z}^i(p) + \lambda_{\Ndot_{V(f_{t_0})},\z}^i(p) \right )
\end{align*}
suggests a sort of ``conserved quantity" between the sum of the L\^{e} numbers of $f_t$ and the characteristic polar multiplicities of $\Ndot_{V(f_t)}$ in one parameter deformations of parameterized hypersurfaces. It is a very interesting question to see how this relates to results in \cite{ndotMHMhep} regarding the structure of $\Ndot_{V(f)}$ as a mixed Hodge module, and the isomorphism $\Ndot_{V(f)} \cong \ker \{ \Id -\wt T_f\}$.
\end{rem}

\begin{exm}
We wish to examine Theorem~\ref{thm:main} in the context of Milnor's double point formula, where $\pi: (\D \times \C,\{0\} \times S) \to (\C^3,\0)$ parameterizes a deformation of the curve $V(f_0)$ into a curve $V(f_{t_0})$ with only double-point singularities (we can identify this deformed curve with the complex link $\mathbb{L}_{V(f),\0}$ inside the total deformation $V(f)$).  In this case, $\dim_\0 \Sigma f_0 = 0$, so the only non-zero L\^{e} number of $f_0$ is $\lambda_{f_0,z}^0(\0)$, where $z$ is any non-zero linear form on $\C^2$, and $\lambda_{f_0,z}^0(\0) = \mu_\0(f_0)$. 


It is then an easy exercise to see that $\lambda_{\Ndot_{V(f_{t_0})},z}^0(p) = m(p) = |\pi^{-1}(p)| -1$ for $t_0$ small (and possibly zero) and $p \in \Sigma f$. 

All together, this gives, by Theorem~\ref{thm:main}
\begin{align*}
\mu_\0(f_0) &= -(r-1) + \sum_{p \in B_\epsilon \cap V(t-t_0)} \left (\mu_p(f_{t_0}) + |\pi^{-1}(p)| -1 \right ) \\
&= 2\delta -r + 1,
\end{align*}
as there are $\delta$ double-points in the deformed curve $V(f_{t_0})$. We have thus recovered Milnor's original double-point formula for the Milnor number of a plane curve singularity. We picture this computation below.

\begin{center}
\includegraphics[width=9cm,height=6cm]{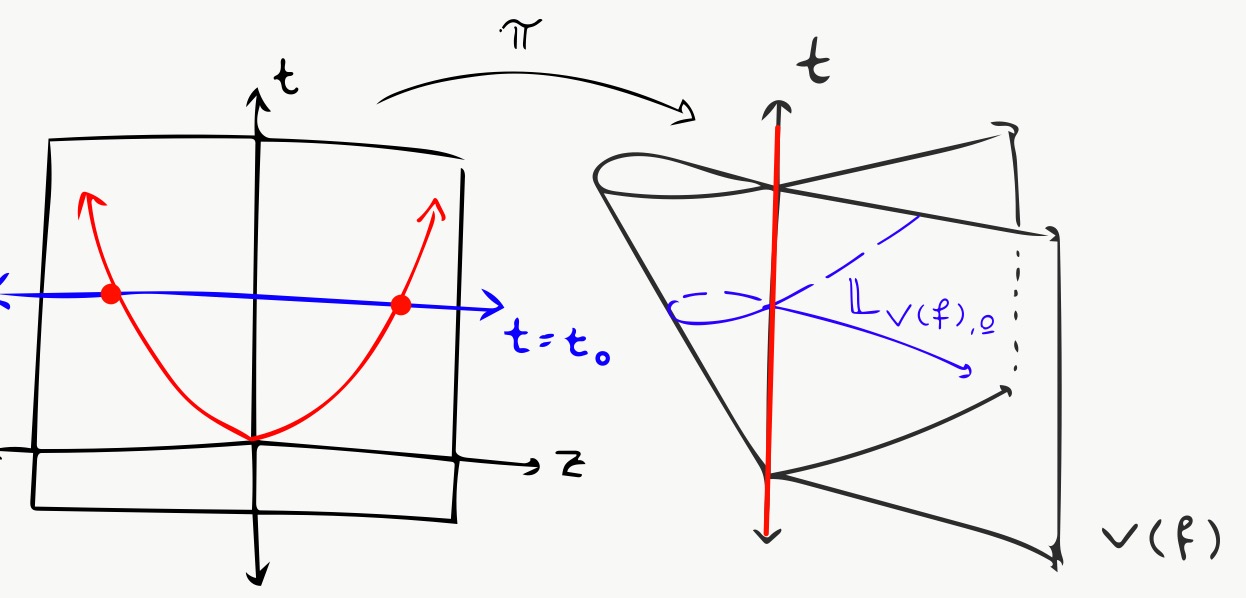}
\end{center}

\end{exm}

In analogy to plane curve singularities deforming into node singularities, it is well-known (see, e.g., \cite{surfacetoC3}), that for stabilizations of finitely determined maps $\pi_0 : (\C^2,S) \to (\C^3,\0)$, the image surface $\im \pi_0 = V(f_0)$ splits into cross caps (i.e., Whitney umbrellas), triple points, and $A_1$-singularities (i.e., nodes, which appear off the hypersurface on the relative polar curve).  These numbers are independent of the stabilization chosen, and depend only on $\pi_0$. We give the precise definition of finite determinacy of maps in \secref{sec:singmaps}.

Unfortunately, detecting these invariants using characteristic polar multiplicities and \thmref{thm:main} will have an unavoidable problem: we will also see points that belong to the \textbf{absolute polar curve $\Gamma_\z^1(\Sigma f)$}, which lie in the smooth part of $\Sigma f$ near $\0$, and are artifacts of our choice of linear forms $\z$ in calculating the characteristic polar multiplicities. For $\z = (t,z)$ a generic pair of linear forms on $\C^4$, the absolute polar curve of $\Sigma f$ at $\0$ is 
$$
\Gamma_\z^1(\Sigma f) = \overline{\Sigma \left ( (t,z)_{|_{\Sigma f}} \right)- \Sigma (\Sigma f)}
$$
(see \cite{leteissierpolar}, \cite{teissiervp2}, but we instead index by dimension instead of codimension).  Consequently, if $p \in \Gamma_\z^1(\Sigma f) \backslash \{\0\}$, we see that $\lambda_{\Ndot_{V(f_{t_0})},z}^0(p) \neq 0$ even if the stalk cohomology of $\Ndot_{V(f_{t_0})}$ is locally constant near $p$. This problem does not occur in the case of Milnor's original result, since the topology of the complex link of $\Sigma f$ at $\0$ is just a that of a finite set of points.

 We thus obtain the following result:

\begin{thm}\label{thm:surfacedeformation}
Suppose $\pi : (\D \times \C^2,\{\0\} \times S) \to (\C^4,\0)$ is a one-parameter unfolding of a finitely-determined map germ $\pi_0 :(\C^2,S) \to (\C^3,\0)$ parameterizing a surface $V(f_0) \subseteq \C^3$. Then, 
\begin{align*}
\lambda_{\Ndot_{V(f_0)},\z}^0(\0) = T +C - \delta + P
\end{align*}
where $T, C$, $\delta$, and $P$ denote the number of triple points, cross caps, $A_1$-singularities appearing in a stable deformation of $V(f_0)$, respectively, and if $V(f) = \im \pi$, $P$ denotes the number of intersection points of the absolute polar curve $\Gamma_{(t,z)}^1(\Sigma f)$ with a generic hyperplane $V(z)$ on $\C^4$ for which $(t,z)$ is an IPA-tuple for $f$ at $\0$.
\end{thm}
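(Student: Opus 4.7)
The plan is to apply the $i=0$ case of the deformation formula in \thmref{thm:main} (equation~\eqref{eqn:ndotdeformation}) and to reduce each term to a local invariant coming from the stable classification of the image $V(f_{t_0})$. By \remref{rem:finitelydeterminedIPA} and \propref{prop:IPAexistence}, a generic one-parameter unfolding of a finitely-determined map germ has isolated instability at $\0$, so $f$ is an IPA-deformation of $f_0$ and, for generic $z$, $(t,z)$ is an IPA-tuple for $f$ at $\0$. Rearranging~\eqref{eqn:ndotdeformation} yields
\[
\lambda_{\Ndot_{V(f_0)},z}^0(\0) \;=\; -\lambda_{\Ndot_{V(f)},(t,z)}^0(\0) \;+\; \sum_{p \in B_\epsilon \cap V(t-t_0)} \lambda_{\Ndot_{V(f_{t_0})},z}^0(p),
\]
so the task reduces to evaluating the two terms on the right.

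For the sum, I would stratify according to the stable singularity types of $V(f_{t_0})$. Each of the $T$ triple points is locally $V(xyz)$, and \exref{exm:ndottriplepoint} gives contribution~$1$; each of the $C$ cross caps contributes~$1$ by the Whitney-umbrella computation of \exref{exm:ndotwhitneyumbrella}. The $\delta$ isolated $A_1$-singularities appear as non-immersive pinches of the parameterization with $|\pi_{t_0}^{-1}(p)| = 1$, so they lie outside $\supp \Ndot_{V(f_{t_0})}$ and contribute zero. Along smooth points of the double-point curve $D_{t_0}$, the sheaf $\Ndot_{V(f_{t_0})}$ is locally isomorphic to $(i_{D_{t_0}})_*\,\Z_{D_{t_0}}^\bullet[1]$, so proper base change reduces $\phi_z[-1]\Ndot_{V(f_{t_0})}$ to the one-dimensional vanishing cycles $\phi_{z|_{D_{t_0}}}[-1]\,\Z_{D_{t_0}}^\bullet[1]$; this contributes~$1$ exactly at the Morse critical points of $z|_{D_{t_0}}$. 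Such Morse points are precisely the smooth-stratum components of $\Gamma_{(t,z)}^1(\Sigma f) \cap V(t-t_0)$, and the dynamic intersection property applied to the $1$-cycle $\Gamma_{(t,z)}^1(\Sigma f)$ identifies their count with $(\Gamma_{(t,z)}^1(\Sigma f) \cdot V(z))_\0 = P$. The sum therefore equals $T + C + P$.

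For the remaining term, \lemref{lem:IPAimplies1} gives $\lambda_{\Ndot_{V(f)},(t,z)}^0(\0) = \lambda_{\Ndot_{V(f)},t}^0(\0) = (\Gamma_{f,t}^1 \cdot V(t))_\0$, and identifying this polar intersection number with $\delta$ is the main obstacle. A direct pointwise bijection fails, because by the IPA hypothesis the points of $\Gamma_{f,t}^1 \cap V(t-t_0)$ are Morse critical points of $f_{t_0}$ lying \emph{off} the surface $V(f_{t_0})$, whereas the $\delta$ image $A_1$-singularities lie on it. I would instead establish the equality through a conservation-of-topology argument for the family $f$: both counts are controlled by the Euler characteristic of the Milnor fibre $F_{t|_{\Sigma f},\0}$ (the complex link of $\Sigma f$ at $\0$), the same invariant that appears in the surface formula stated in the introduction. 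Combining this identification with the evaluation of the sum gives $\lambda_{\Ndot_{V(f_0)},z}^0(\0) = -\delta + T + C + P = T + C - \delta + P$, completing the proof.
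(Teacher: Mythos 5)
Your setup and your evaluation of the sum $\sum_p \lambda_{\Ndot_{V(f_{t_0})},z}^0(p) = T + C + P$ follow the paper's route: the rearranged identity you start from is exactly \thmref{thm:IPAimplies2} combined with the dynamic intersection property, which is how the paper's proof proceeds, and the local contributions $1$ at triple points and cross caps are the cited examples. The genuine gap is in your last step. The obstacle you flag --- that the points of $\Gamma_{f,t}^1\cap V(t-t_0)$ lie off the surface $V(f_{t_0})$ whereas ``the $\delta$ image $A_1$-singularities lie on it'' --- rests on a misreading of what $\delta$ counts. In this paper the $A_1$-singularities of a stabilization are by definition the Morse critical points of the defining equation $f_{t_0}$ that appear \emph{off} the hypersurface, on the relative polar curve (see the discussion preceding the theorem: ``$A_1$-singularities (i.e., nodes, which appear off the hypersurface on the relative polar curve)''); they are not singular points of the image, whose stable local models are exhausted by immersions, double curves, triple points and cross caps. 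Consequently the ``direct pointwise bijection'' you reject is precisely the intended argument: the critical points of $f_{t_0}$ away from $\Sigma f$ are exactly the points of $\Gamma_{f,t}^1\cap V(t-t_0)$, each is of type $A_1$ in a stabilization and so contributes local intersection number $\mu_p(f_{t_0})=1$, and the dynamic intersection property gives
\[
\lambda_{\Ndot_{V(f)},t}^0(\0)=\left(\Gamma_{f,t}^1\cdot V(t)\right)_\0=\sum_{p\in B_\epsilon\cap\,\Gamma_{f,t}^1\cap V(t-t_0)}\mu_p(f_{t_0})=\delta.
\]
The substitute you propose --- an unexecuted ``conservation-of-topology'' argument via $\chi(\mathbb{L}_{\Sigma f,\0})$ --- would not obviously close the gap: that Euler characteristic enters only in \corref{cor:surfacedeformation}, where it is used to eliminate $P$, not to identify $\delta$.

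A smaller slip in the same vein: the $A_1$ points are not ``non-immersive pinches of the parameterization with $|\pi_{t_0}^{-1}(p)|=1$'' (that describes a cross cap). Your conclusion that they contribute $0$ to the sum is nevertheless correct, but for the reason above: they do not lie on $V(f_{t_0})$ at all, hence not on $\supp\Ndot_{V(f_{t_0})}$.
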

\begin{proof}
This follows directly from \thmref{thm:IPAimplies2}, \remref{rem:finitelydeterminedIPA}, \lemref{lem:IPAimplies1}, and recalling that $\lambda_{\Ndot_{V(f_{t_0})},\z}^0(\0) =1$ for both Whitney umbrellas and triple point singularities in $\C^3$ (see \exref{exm:ndotwhitneyumbrella} and \exref{exm:ndottriplepoint}). The $\delta$ term is equal to the degree of the relative polar curve $\Gamma_{f,t}^1$ at the origin, i.e., 
$$
\delta = \left ( \Gamma_{f,t}^1 \cdot V(t) \right )_\0 = \lambda_{\Ndot_{V(f),t}}^0(\0).
$$
\end{proof}

In fact, we can explicitly identify the Euler characteristic $\lambda_{\Ndot_{V(f_0)},\z}^0(\0)- \lambda_{\Ndot_{V(f_0)},\z}^1(\0)$ using \thmref{thm:surfacedeformation}.

\begin{cor}\label{cor:surfacedeformation}
Let $\pi_0$, $\pi$, $T,C,\delta,$ and $P$ be as in \thmref{thm:surfacedeformation}. Then, the following equalities hold:
\begin{align*}
\lambda_{\Ndot_{V(f_0)},\z}^0(\0) - \lambda_{\Ndot_{V(f_0)},\z}^1(\0) &= \chi(\Ndot_{V(f_0)})_\0 = -|\pi_0^{-1}(\0)| +1 = C-T-\delta - \chi(\mathbb{L}_{\Sigma f,\0}),
\end{align*}
where $\mathbb{L}_{\Sigma f,\0} \cong F_{t_{|_{\Sigma f}},\0}$ denotes the complex link of $\Sigma f$ at $\0$.
\end{cor}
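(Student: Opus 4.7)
The corollary amounts to a chain of three equalities. The first, $\lambda_{\Ndot_{V(f_0)},\z}^0(\0) - \lambda_{\Ndot_{V(f_0)},\z}^1(\0) = \chi(\Ndot_{V(f_0)})_\0$, is immediate from \remref{rem:charpolcycleeuler} since $\supp \Ndot_{V(f_0)}$ is at most $1$-dimensional. The second, $\chi(\Ndot_{V(f_0)})_\0 = -|\pi_0^{-1}(\0)|+1$, follows from \thmref{thm:qhomcriterion}: the surface $V(f_0)$ has smooth normalization $\C^2$, so the stalk cohomology of $\Ndot_{V(f_0)}$ is concentrated in degree $-n+1 = -1$, with rank $|\pi_0^{-1}(\0)|-1$ (the identity already invoked in \exref{exm:ndottriplepoint} and \exref{exm:ndotwhitneyumbrella}, derivable from the defining sequence $0 \to \Ndot_{V(f_0)} \to \Q_{V(f_0)}^\bullet[2] \to \Idot_{V(f_0)} \to 0$ by inspecting stalks at $\0$). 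Hence $\chi(\Ndot_{V(f_0)})_\0 = (-1)^{-1}(|\pi_0^{-1}(\0)|-1)$.

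For the third equality $-|\pi_0^{-1}(\0)|+1 = C - T - \delta - \chi(\mathbb{L}_{\Sigma f,\0})$, the plan is to compute $\chi(\psi_t\Ndot_{V(f)})_\0$ in two ways and equate. The algebraic side uses the distinguished triangle $\Ndot_{V(f)}|_{V(t)} \to \psi_t\Ndot_{V(f)} \to \phi_t\Ndot_{V(f)} \xrightarrow{+1}$ to write $\chi(\psi_t\Ndot_{V(f)})_\0 = \chi(\Ndot_{V(f)})_\0 + \chi(\phi_t\Ndot_{V(f)})_\0$. The $3$-dimensional $V(f)$ also has smooth normalization $\D \times \C^2$, so \thmref{thm:qhomcriterion} now places $\Ndot_{V(f)}$ in concentration degree $-2$ — a crucial parity flip relative to the $2$-dimensional case — giving $\chi(\Ndot_{V(f)})_\0 = +(|\pi_0^{-1}(\0)|-1)$. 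Combined with $\chi(\phi_t\Ndot_{V(f)})_\0 = -\delta$, obtained from \lemref{lem:IPAimplies1} after accounting for the shift between $\phi_t[-1]$ and $\phi_t$, the algebraic computation yields $\chi(\psi_t\Ndot_{V(f)})_\0 = (|\pi_0^{-1}(\0)|-1) - \delta$.

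The geometric side identifies $\chi(\psi_t\Ndot_{V(f)})_\0$ with the $\chi$-integral of the constructible function $p \mapsto \chi(\Ndot_{V(f)})_p = |\pi_{t_0}^{-1}(p)|-1$ over the Milnor fiber $F_{t,\0} \cong V(f_{t_0})\cap B_\epsilon$. By \remref{rem:finitelydeterminedIPA}, a generic one-parameter unfolding of the finitely-determined $\pi_0$ realizes $V(f_{t_0})$ as a stable image, so one may stratify by the Mather-stable multigerm classes: immersion and cross-cap points contribute $0$ (both have $|\pi_{t_0}^{-1}|=1$, the cross cap being a branch point), generic double-curve points contribute $+1$, and triple points contribute $+2$; the $A_1$-singularities lie off $V(f)$ on the relative polar curve and so do not enter the integration. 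Using the identification $\mathbb{L}_{\Sigma f,\0} \cong D_{t_0} \cap B_\epsilon$ together with $\chi(D_{t_0}^\circ) = \chi(\mathbb{L}_{\Sigma f,\0}) - T - C$, the integral evaluates to $\chi(\mathbb{L}_{\Sigma f,\0}) + T - C$. Equating the two expressions and rearranging delivers the third equality. The main delicacy is bookkeeping signs — in particular, tracking the parity flip between the $2$- and $3$-dimensional applications of \thmref{thm:qhomcriterion} and the shift-induced sign difference between $\chi(\phi_t[-1]\Ndot)$ and $\chi(\phi_t\Ndot)$ — since it is precisely these signs that produce the $-\delta$ on the right-hand side of the final identity rather than $+\delta$.
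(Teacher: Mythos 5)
Your argument is correct, but it takes a genuinely different route from the paper's. The paper proves the corollary twice, and both of its proofs pass through \thmref{thm:surfacedeformation}: they compute the correction term $P$ coming from the absolute polar curve $\Gamma_{(t,z)}^1(\Sigma f)$ --- either by a direct topological count of how the complex link of $\Sigma f$ changes as the value of $z$ moves away from $0$ (picking up $2T+P$), or by taking Euler characteristics of the distinguished triangle $\left (\psi_t[-1]\Z_{\Sigma f}^\bullet[2] \right )_{|_{V(z)}}[-1] \to \psi_z[-1]\psi_t[-1]\Z_{\Sigma f}^\bullet[2] \to \phi_z[-1]\psi_t[-1]\Z_{\Sigma f}^\bullet[2] \xrightarrow{+1}$ --- and then substitute the resulting expression for $P$ into $\lambda_{\Ndot_{V(f_0)},\z}^0(\0)=T+C-\delta+P$. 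You instead compute $\chi(\psi_t\Ndot_{V(f)})_\0$ twice: algebraically as $\chi(\Ndot_{V(f)})_\0+\chi(\phi_t\Ndot_{V(f)})_\0=(|\pi_0^{-1}(\0)|-1)-\delta$ (using \thmref{thm:qhomcriterion} in ambient dimension three together with \lemref{lem:IPAimplies1}), and geometrically as the integral of the constructible function $p\mapsto|\pi^{-1}(p)|-1$ over the stable fiber $V(f_{t_0})\cap B_\epsilon$ stratified by the stable multigerm types, giving $T-C+\chi(\mathbb{L}_{\Sigma f,\0})$. This buys you something real: the auxiliary linear form $z$ is never chosen, so the absolute polar curve and the coordinate-dependent term $P$ disappear from the argument entirely; what you give up is that you do not recover \thmref{thm:surfacedeformation} or an expression for $P$ along the way. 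Your sign bookkeeping is right --- the degree $-2$ versus degree $-1$ concentration of the stalks and the shift between $\phi_t$ and $\phi_t[-1]$ do conspire to produce exactly the $-\delta$ in the final identity. The one external input you share with the paper is the classification of stable singularities of images of maps from surfaces to $\C^3$ (immersions, double curve, cross caps, triple points) and their preimage counts; strictly speaking that is Mond's result cited in the paper, not \remref{rem:finitelydeterminedIPA}, which only supplies the isolated-instability/IPA property of a generic unfolding.
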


\begin{rem}
Before we give the proof of \corref{cor:surfacedeformation} using derived category techniques, we will give a down-to-earth topological argument. The key idea in our proof is that one can compute the term $P$ using constant $\Z$ coefficients instead of $\Ndot_{V(f)}$, since $\Ndot_{V(f)}$ generically has stalk cohomology $\Z$ along $\Sigma f$ for hypersurfaces $V(f)$ that are the image of finitely-determined map germs \cite{surfacetoC3}.
\end{rem}

\begin{proof}(topological argument)
We compute the Euler characteristic of the pair $\chi(\mathbb{L}_{\Sigma f,\0},\mathbb{L}_{\Sigma f_0,\0})$. We can use $t$ to compute the complex link of $\Sigma f$ and $z$ to compute the complex link of $\Sigma f_0 = V(t) \cap \Sigma f$. This pair of subspaces makes sense, using the fact that $f$ is an IPA-deformation of $f_0$, and the complex link $\mathbb{L}_{\Sigma f_0,\0}$ of $\Sigma f_0$ is a finite set of points, and their multiplicity is unchanged as one moves in the $t$ direction away from the origin, pictured below:
\begin{center}
\includegraphics[width=4in,height=3in]{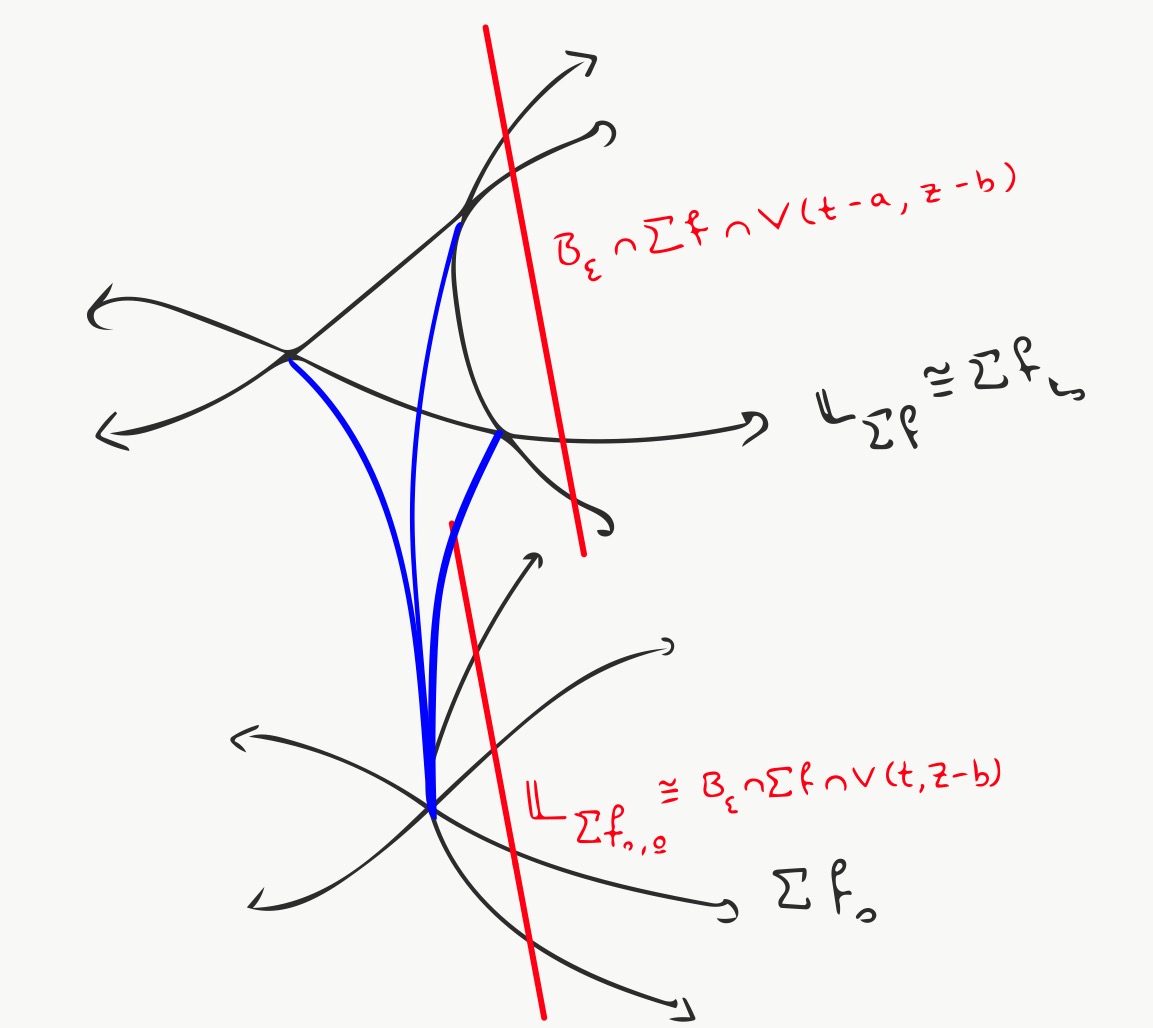}
\end{center}
Thus, we can identify $\mathbb{L}_{\Sigma f_0,\0} = B_\epsilon \cap \Sigma f \cap V(t,z-b)$ with $B_\epsilon \cap \Sigma f \cap V(t-a,z-b)$ for $0 < |a| \ll |b| \ll \epsilon \ll 1$. Consequently, we can identify 
$$
\chi(\mathbb{L}_{\Sigma f,\0},\mathbb{L}_{\Sigma f_0,\0}) = \chi(\phi_z[-1]\Z_{\mathbb{L}_{\Sigma f,\0}}^\bullet[1])_\0 = \sum_p \lambda_{\Z_{\Sigma f_{t_0}}^\bullet[1],z}^0(p).
$$
As the value of $z$ grows from $0$ to $b$, we pick up cohomological contributions (in the form of a non-zero multiplicity $\lambda_{\Z_{\Sigma f_{t_0}}^\bullet[1],z}^0(p)$) as we pass through points of the curves of triple points, cross caps, and the absolute polar curve with respect to $(t,z)$, pictured below:
\begin{center}
\includegraphics[width=5in,height=3in]{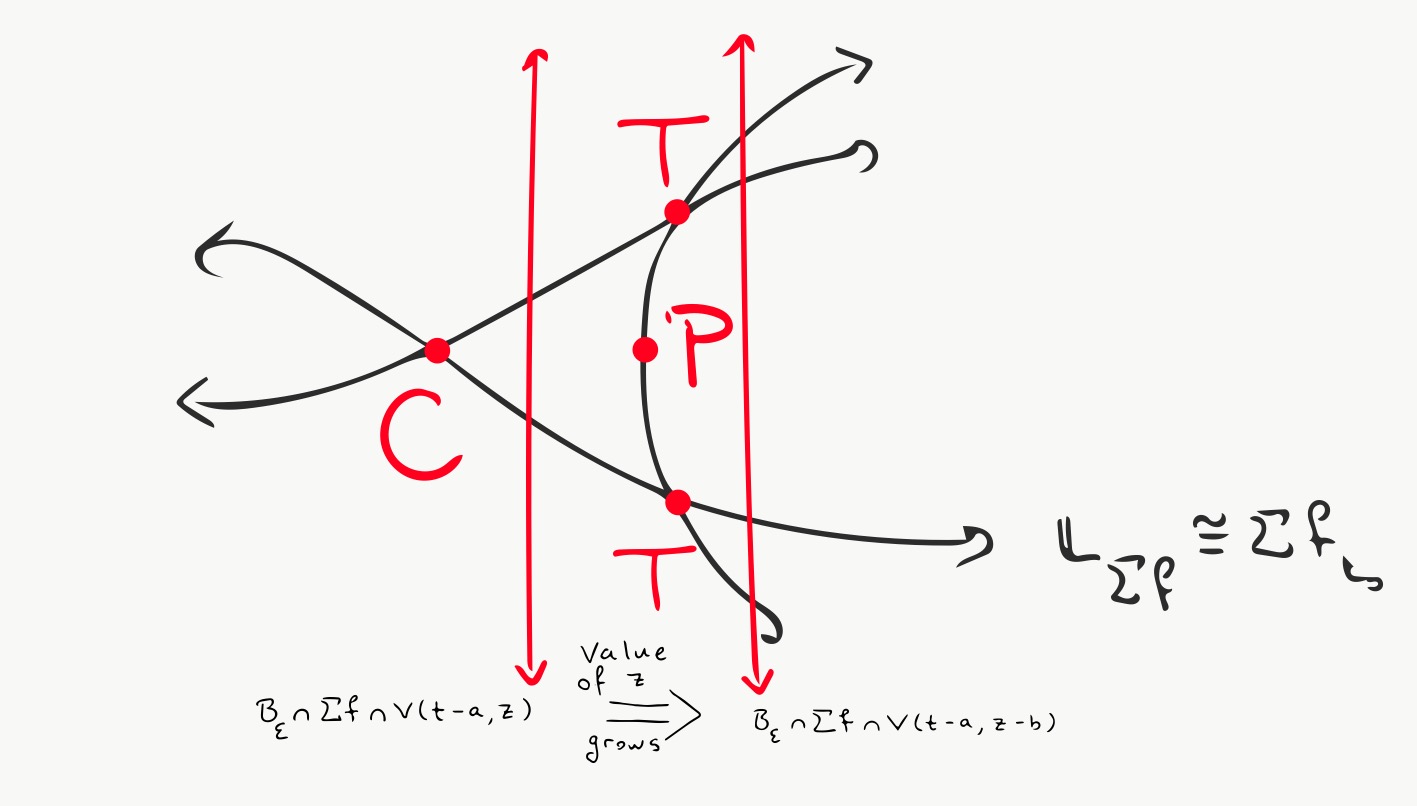}
\end{center}
At triple points, $\lambda_{\Z_{\Sigma f_{t_0}}^\bullet[1],z}^0(p) =2$, and at cross caps  $\lambda_{\Z_{\Sigma f_{t_0}}^\bullet[1],z}^0(p) = 0$. We count the contribution from the absolute polar curve as $P = \left (\Gamma_{(t,z)}^1(\Sigma f) \cdot V(z) \right )_\0$.
\begin{align*} 
2T+P = \chi(\mathbb{L}_{\Sigma f,\0},\mathbb{L}_{\Sigma f_0,\0}) &= \chi(\mathbb{L}_{\Sigma f,\0}) - \chi(\mathbb{L}_{\Sigma f_0,\0}) \\
&= \chi(\mathbb{L}_{\Sigma f,\0}) - \lambda_{\Ndot_{V(f_0)},z}^1(\0).
\end{align*}
Solving for $P$ and plugging the resulting expression into \thmref{thm:surfacedeformation} gives the result.
\end{proof}

\begin{proof}(perverse sheaves argument)
We wish to better understand the contribution of the term $P$ coming from the absolute polar curve of $\Sigma f$ appearing in \thmref{thm:surfacedeformation}. First, we note that these terms come from the 0-dimensional characteristic polar multiplicities $\lambda_{\Ndot_{V(f_{t_0})},\z}^0(p)$ in the expansion of $\lambda_{\Ndot_{V(f)},(t,z)}^1(\0)$, where $p$ is a smooth point of $\Sigma f$ in the $V(t-t_0)$ slice. Since the transverse singularity type of the image of a finitely-determined map is always that of a Morse function (see e.g., Mond \cite{surfacetoC3}), the stalk cohomology of $\Ndot_{V(f)}$ is $\Z$ at all smooth points of $\Sigma f$.  Consequently, we can calculate $P$ using the constant sheaf $\Z_{\Sigma f}^\bullet[2]$ in place of $\Ndot_{V(f)}$. 

However, $\Z_{\Sigma f}^\bullet[2]$ is not necessarily a perverse sheaf. To deal with this, note that, for all $t_0 \neq 0$, the restriction $\left (\Z_{\Sigma f}^\bullet[2] \right )_{|_{V(t-t_0)}} \cong \Z_{\Sigma f_{t_0}}^\bullet[1]$ is a perverse sheaf (the shifted constant sheaf on a curve is always perverse), and therefore $\psi_t[-1]\Z_{\Sigma f}^\bullet[2]$ is perverse. 

We then examine Euler characteristics at the origin of the distinguished triangle
\begin{equation}\label{eqn:disttri}
\left (\psi_t[-1]\Z_{\Sigma f}^\bullet[2] \right )_{|_{V(z)}}[-1] \to \psi_z[-1]\psi_t[-1]\Z_{\Sigma f}^\bullet[2] \to \phi_z[-1]\psi_t[-1]\Z_{\Sigma f}^\bullet[2] \xrightarrow{+1},
\end{equation}
where $\psi_z[-1]\psi_t[-1]\Z_{\Sigma f}^\bullet[2]$ and $\phi_z[-1]\psi_t[-1]\Z_{\Sigma f}^\bullet[2]$ are perverse sheaves for which $\0$ is an isolated point in their support. By \defref{def:charpol}, 
$$
\chi(\phi_z[-1]\psi_t[-1]\Z_{\Sigma f}^\bullet[2])_\0 = \rank H^0(\phi_z[-1]\psi_t[-1]\Z_{\Sigma f}^\bullet[2])_\0 = \lambda_{\Z_{\Sigma f}^\bullet[2],(t,z)}^1(\0).
$$

To calculate $\chi(\psi_z[-1]\psi_t[-1]\Z_{\Sigma f}^\bullet[2])_\0$, note that $\dim_\0 \supp \phi_t[-1]\Z_{\Sigma f}^\bullet[2] \leq 0$ (since $f$ is an IPA-deformation of $f_{|_{V(t)}}$ at $\0$) implies $\psi_z[-1]\phi_t[-1]\Z_{\Sigma f}^\bullet[2] = 0$, and so 
$$
\psi_z[-1]\Z_{\Sigma f_0}^\bullet[1] \xrightarrow{\thicksim} \psi_z[-1]\psi_t[-1]\Z_{\Sigma f}^\bullet[2].
$$
Thus, $\chi(\psi_z[-1]\psi_t[-1]\Z_{\Sigma f}^\bullet[2])_\0 = \chi(\psi_z[-1]\Z_{\Sigma f_0}^\bullet[1])_\0 = \lambda_{\Z_{\Sigma f_0}^\bullet[1],z}^1(\0)$. It is easy to see that $\lambda_{\Z_{\Sigma f_0}^\bullet[1],z}^1(\0) = \lambda_{\Ndot_{V(f_0)},z}^1(\0)$, since the transverse singularity type of $\Sigma f_0$ is that of a Morse function. 

Finally, we see that $\chi(\left (\psi_t[-1]\Z_{\Sigma f}^\bullet[2]\right )_{|_{V(z)}}[-1])_\0 = \chi(F_{t_{|_{\Sigma f}},\0}) = \chi(\mathbb{L}_{\Sigma f,\0})$, and we obtain the following formula from taking the Euler characteristic of (\ref{eqn:disttri}):
\begin{equation}\label{eqn:altsum}
\chi(\mathbb{L}_{\Sigma f,\0}) - \lambda_{\Ndot_{V(f_0)},z}^1(\0) + \lambda_{\Z_{\Sigma f}^\bullet[2],(t,z)}^1(\0) = 0.
\end{equation}
Using the dynamic intersection property, 
$$
\lambda_{\Z_{\Sigma f}^\bullet[2],(t,z)}^1(\0) = \sum_{p \in B_\epsilon \cap V(t-t_0)} \lambda_{\Z_{\Sigma f_{t_0}}^\bullet[1],z}^0(p) = 2T+P,
$$
since $\lambda_{\Z_{\Sigma f_{t_0}}^\bullet[1],z}^0(p) = 2$ when $p$ is a triple point singularity, and $\lambda_{\Z_{\Sigma f_{t_0}}^\bullet[1],z}^0(p) = 0$ when $p$ is a cross-cap singularity. The remaining terms, as in \thmref{thm:surfacedeformation}, come from the absolute polar curve of $\Sigma f$ with respect to $V(z)$. Consequently, we can solve for $P$ using (\ref{eqn:altsum})
$$
P = \lambda_{\Ndot_{V(f_0)},z}^1(\0) -\chi(\mathbb{L}_{\Sigma f,\0}) - 2T.
$$
Plugging this expression for $P$ into \thmref{thm:surfacedeformation} tells us
\begin{align*}
\lambda_{\Ndot_{V(f_0)},z}^0(\0) &= T+ C- \delta + P \\
&= T+ C- \delta + \lambda_{\Ndot_{V(f_0)},z}^1(\0) -\chi(\mathbb{L}_{\Sigma f,\0}) - 2T
\end{align*}
and so 
\begin{align*}
\chi(\Ndot_{V(f_0)})_\0 = \lambda_{\Ndot_{V(f_0)},z}^0(\0) - \lambda_{\Ndot_{V(f_0)},z}^1(\0) = C-T-\delta -\chi(\mathbb{L}_{\Sigma f,\0}).
\end{align*}
Finally, the Corollary follows from the fact that $\Ndot_{V(f_0)}$ has stalk cohomology concentrated in degree $-1$ (by \thmref{thm:qhomcriterion} and \remref{rem:charpolcycleeuler})
\end{proof}

\begin{rem}\label{rem:surfacedeformation}
If $V(f_0)$ is itself a $\Q$-homology manifold, then $\Ndot_{V(f_0)} = 0$. In this case, \thmref{thm:surfacedeformation} tells us that, in a stabilization $V(f)$ of $V(f_0)$, we have
$$
\chi(\mathbb{L}_{\Sigma f,\0}) = C-T-\delta.
$$
This scenario happens, for example, in L\^{e}'s Conjecture below.
\end{rem}

\subsection{Relationship with L\^{e}'s Conjecture}\label{subsec:leconj} Parameterized surfaces in $\C^3$ are the subject of a long-standing conjecture by L\^{e} D\~{u}ng Tr\'{a}ng \cite{1983Ntes}, in the vein of classical equisingularity problems of Mumford \cite{1961Pmdl} and Zariski, and is related to Bobadilla's Conjecture \cite{bobleconj}.
\begin{conj}[L\^{e}]
Suppose $(V(f),\0) \subseteq (\C^3,\0)$ is a reduced hypersurface with $\dim_\0 \Sigma f = 1$, for which the normalization of $V(f)$ is a bijection. Then, in fact, $V(f)$ is the total space of an equisingular deformation of plane curve singularities.
\end{conj}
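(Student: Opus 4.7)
The plan is to use \thmref{thm:main} together with \thmref{thm:qhomcriterion} to reduce L\^{e}'s conjecture to a cohomological concentration statement for $\Ndot_{V(f)}$, after which a $\mu$-constant criterion for families of plane curves will close the argument. First I would pick a generic linear form $t : \C^3 \to \C$ transverse to $\Sigma f$ at $\0$ so that, by \propref{prop:IPAexistence}, $f$ is an IPA-deformation of $f_0 := f_{|_{V(t)}}$, and an additional generic linear form $z$ making $(t,z)$ an IPA-tuple for $f$ at $\0$. Then $V(f_0)$ is an isolated plane curve singularity, and for $0 < |t_0| \ll 1$ the slice $V(f_{t_0})$ consists of plane curve singularities at the finitely many points of $\Sigma f \cap V(t-t_0)$, so that equisingularity of the family $\{V(f_{t_0})\}$ along each branch of $\Sigma f$ is exactly the conclusion we seek.

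Second, I would exploit the bijectivity of the normalization $\pi : \wt{V(f)} \to V(f)$. It forces $|\pi^{-1}(p)|-1 = 0$ for every $p \in V(f)$, which, combined with the Euler characteristic formula $\chi(\Ndot_{V(f)})_p = -|\pi^{-1}(p)|+1$ exploited in \exref{exm:ndottriplepoint}, gives $\chi(\Ndot_{V(f)}) \equiv 0$ as a constructible function on $V(f)$. At a generic (smooth) point of $\Sigma f$, the transverse slice of $V(f)$ is an irreducible plane curve branch with unibranch normalization, forcing $\Ndot_{V(f)}$ to be concentrated in a single cohomological degree there; the vanishing of the Euler characteristic then yields $\Ndot_{V(f)} = 0$ generically along $\Sigma f$. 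The goal of this step is then to upgrade this to the vanishing of $\Ndot_{V(f)}$ at $\0$ as well, so that $\Ndot_{V(f_0)} = 0$ at $\0$ (via \remref{rem:ndotslicing}) and $\Ndot_{V(f_{t_0})} = 0$ at every $p \in \Sigma f \cap V(t-t_0)$.

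Once this vanishing is in hand, \thmref{thm:main} applied to the one-parameter unfolding collapses (since all characteristic polar multiplicities of $\Ndot_{V(f_0)}$ and $\Ndot_{V(f_{t_0})}$ vanish) to
\[
\mu_\0(f_0) = \sum_{p \in B_\epsilon \cap V(t-t_0)} \mu_p(f_{t_0}),
\]
which is the $\mu$-constant condition for the one-parameter family of plane curve singularities supported on the branches of $\Sigma f$ through $\0$. By the classical theorems of Teissier and L\^{e}-Ramanujam (and the fact that for plane curves $\mu$-constant is equivalent to Zariski equisingularity), this is exactly the conclusion of L\^{e}'s conjecture.

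The main obstacle is the upgrade step in the second paragraph: bijective normalization forces the \emph{Euler characteristic} of $\Ndot_{V(f)}$ to vanish everywhere, but leaves open the possibility that at $\0$ the stalk cohomology $H^{-2}(\Ndot_{V(f)})_\0$ and $H^{-1}(\Ndot_{V(f)})_\0$ are both nonzero and simply cancel in the alternating sum. By \thmref{thm:qhomcriterion}, ruling this out is equivalent to showing the normalization $\wt{V(f)}$ is a $\Q$-homology manifold, which holds trivially when $\wt{V(f)}$ is smooth (this is Bobadilla's setting that the paper recovers, cf.\ \remref{rem:surfacedeformation}) but is precisely the deeper cohomological content of the full conjecture. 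So I expect the machinery here to yield a clean derivation of Bobadilla's smooth-normalization case and, more generally, a conditional reduction of L\^{e}'s conjecture to the $\Q$-homology manifold property of $\wt{V(f)}$, with that remaining step being the essential outstanding point rather than anything internal to the deformation formula itself.
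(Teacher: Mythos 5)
First, be aware that this statement is L\^e's Conjecture itself: the paper does not prove it, and neither does your proposal. What the paper actually establishes (in \subsecref{subsec:leconj}) is Bobadilla's special case, namely the corank-one situation in which the normalization $\pi$ can be written as a one-parameter unfolding $\pi(t,\z)=(t,\pi_t(\z))$. Your very first step --- choosing a generic linear form $t$ and invoking \propref{prop:IPAexistence} and \thmref{thm:main} --- silently imports exactly that hypothesis: both results require $\pi$ to be a one-parameter unfolding with respect to $t$, and for a general bijective normalization there is no reason such a splitting exists. The paper is explicit that this is the real obstruction: ``The difficult part for the general conjecture is reducing [to] the above case, where one does not know if $\pi$ is an unfolding.'' By contrast, the obstacle you single out --- that $\chi(\Ndot_{V(f)})\equiv 0$ might conceal cancelling stalk cohomology at $\0$, so that one must first prove $\wt{V(f)}$ is a $\Q$-homology manifold --- is not where the difficulty lies. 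Bijectivity of the normalization is equivalent to the vanishing of the whole perverse sheaf $\Ndot_{V(f)}$, not merely of its Euler characteristic, as the paper notes immediately after stating the conjecture; there is nothing to ``upgrade.'' Indeed the difficulty is the opposite of what you suggest: precisely because $\Ndot_{V(f)}=0$, the comparison complex carries no information and the machinery of this paper has nothing to act on.

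Within the corank-one case your outline does essentially track the paper: $\Ndot_{V(f)}=0$ collapses the deformation formula to $\mu_\0(f_0)=\sum_{p\in B_\epsilon\cap V(t-t_0)}\mu_p(f_{t_0})$. But your closing step is also incomplete. That equality is a statement about a \emph{sum over possibly several points} of the nearby slice; it is not yet a $\mu$-constant family, and Teissier/L\^e--Ramanujam do not apply until you know the singular point does not split. The paper closes the argument with the non-splitting theorem of Gabrielov, Lazzeri, and L\^e, which converts this equality into the statement that $\Sigma f$ is smooth at $\0$ and that the family is $\mu$-constant along a single section; only then does equisingularity of the family of plane curves follow. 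You should invoke that result explicitly rather than the $\mu$-constant $\Rightarrow$ equisingular implication alone.
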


We note that the assumption of the normalization of $V(f)$ being a bijection is equivalent to $\Ndot_{V(f)} = 0$, and that the conjecture is equivalent to the vanishing $\phi_L[-1]\Z_{V(f)}^\bullet[2] = 0$ for generic linear forms $L$ on $\C^3$. 

The first case to examine for this conjecture is when $\pi : (\C^2,\0) \to (\C^3,\0)$ is a corank 1 one map, which we may take to mean that $\pi$ is a one parameter unfolding with parameter $t$. This is the case proved in Bobadilla's reformulation of L\^{e}'s Conjecture in \cite{bobleconj}, in which $\Sigma f$ contains a smooth curve through the origin. Using \thmref{thm:main}, we can provide an alternative proof. 

This is actually the degenerate case mentioned in the introduction! Recall the non-stable deformation formula (\ref{eqn:generalmilnor}):
$$
\mu_\0(f_0) = -m(\0) + \sum_{p \in B_\epsilon \cap V(t-t_0)} \left (\mu_p(f_{t_0}) + m(p) \right ),
$$
where $m(p) = |\pi^{-1}(p)|-1$. Since $\pi$ is a bijection, we must have $m(p) = 0$ for all $p \in V(f)$. Hence, 
$$
\mu_\0(f_0) = \sum_{p \in B_\epsilon \cap V(t-t_0)} \mu_p(f_{t_0}),
$$
so the result follows from the non-splitting result of Gabrielov \cite{gabrielov}, Lazzeri \cite{lazzerimono}, and L\^{e} \cite{leacampo} (where this equality implies $\Sigma f$ is smooth at the origin, and thus defines a $\mu$-constant family of curves).

The difficult part for the general conjecture is reducing the above case, where one does not know if $\pi$ is an unfolding. Since $\Ndot_{V(f)} = 0$, it will be difficult to adapt the results obtained in this paper toward the conjecture.

\subsection{Other Generalizations in the Literature}\label{subsec:otherstuff}

\begin{rem}
In Lemma 2.2 of \cite{mondbentwires}, David Mond also obtains the result that, for a stabilization of a plane curve singularity $V(f_0)$, one has 
$$
\mu_\0 \left ( t_{|_{V(f)}} \right ) = \delta - r +1,
$$
where $\mu_\0 \left ( t_{|_{V(f)}} \right )$ is called the \textbf{image Milnor number} of the stabilization.  It is an interesting question in general how one can relate the theory of map germs from $\C^n$ to $\C^{n+1}$ of finite $\mathcal{A}$-codimension (in Mather's nice dimensions ($n<15$) and beyond) to our result \thmref{thm:main}. Even more so would be to understand the relationship between this theorem and Mond's conjecture, since IPA-deformations are more general than deformations typically used in the singularity theory of maps.
\end{rem}

\begin{rem}
Gaffney also generalizes the result $\mu_\0 (t_{|_{V(f)}}) = \delta - r +1$ in \cite{l0equivalence}, although to the very different setting of maps $G: (\C^n,S) \to (\C^{2n},\0)$. In Theorem 3.2 and Corollary 3.3 of \cite{l0equivalence}, this formula is derived in terms of the Segre number of dimension 0 of an ideal associated to the image multiple-point set and the number of Whitney umbrellas of the composition of the map $G$ with a generic projection to $\C^{2n-1}$. 
\end{rem}

\begin{rem}\label{rem:Gaffneyrem}
In the case of finitely-determined maps $F : (\C^2,\0) \to (\C^3,\0)$ of the form $F(t,z^2,F_3(t,z))$, $\im F = V(f)$ defines a surface whose singular locus $\Sigma f$ is an \textbf{isolated complete intersection singularity} by results of Mond and Pellikaan (e.g., Prop. 2.2.4 of \cite{singmaps}). In this case, the results of \cite{gaffneypairs} apply, and we can recover Gaffney's formula (Proposition 2.4) for the $0$-dimensional L\^{e} number of $f$ at $\0$ 
$$
\lambda_{f,\z}^0(\0) = \delta + 2C+e(JM(\Sigma f)).
$$
where $\delta$ (resp., $C$) is the number of $A_1$-singularities (resp., cross caps) appearing in a stabilization of $F$, and $e(JM(\Sigma f))$ is the Buchbaum-Rim multiplicity of the Jacobian Module of $\Sigma f = D$. This follows directly from \thmref{thm:main}, using the fact that $e(JM(\Sigma f))$ gives the number of critical points of a generic linear form on the curves of multiple-points in the stabilization (which comprise the term $P$ used in \thmref{thm:surfacedeformation}). Finally, since $\Sigma f$ is an isolated complete intersection singularity, there can be no triple points in a stabilization of $F$. We would like to express our thanks to Terence Gaffney for pointing out this relationship.
\end{rem}





It is a very interesting question to see what formulas might arise from \thmref{thm:main} when one works outside of Mather's nice dimensions; for $n \geq 15$, one can no longer approximate a finitely determined map with stable maps, but the relationship in \thmref{thm:main} still holds. 

\bigskip

\section{Appendix: The L\^{e} Cycles and Relative Polar Varieties}\label{sec:classicalcycles}

The L\^{e} numbers of a function with a non-isolated critical locus are the fundamental invariants we consider in this paper. First defined by Massey in \cite{levar1} and \cite{levar2}, these numbers generalize the Milnor number of a function with an isolated critical point. 


The L\^{e} cycles and numbers of $g$ are classically defined with respect to a \textbf{prepolar-tuple} of linear forms $\z = (z_0,\cdots,z_n)$; loosely, these are linear forms that transversely intersect all strata of a good stratification of $V(g)$ near $\0$ (see, for example, Definition 1.26  of \cite{lecycles}).  The purpose of Proposition~\ref{prop:basic} in Section~\ref{sec:IPA} is to replace the assumption of prepolar-tuples with IPA tuples.

\begin{defin}\label{def:relpolclassical}
The \textbf{$k$-dimensional relative polar variety of $g$ with respect to $\z$}, at the origin, denoted $\Gamma_{g,\z}^k$, consists of those components of the analytic cycle $V \left (\frac{\pd g}{\pd z_k},\cdots,\frac{\pd g}{\pd z_n} \right )$ at the origin which \emph{are not} contained in $\Sigma g$.  
\end{defin}

\begin{defin}\label{def:lecycles}

The \textbf{$k$-dimensional L\^e cycle of $g$ with respect to $\z$}, at the origin, denoted $\Lambda_{g,\z}^k$, consists of those components of the analytic cycle $\Gamma_{g,\z}^{k+1} \cdot V \left ( \frac{\pd g}{\pd z_k} \right )$ which \emph{are} contained in $\Sigma g$.
\end{defin}


\begin{defin}\label{def:lenums}
The \textbf{$k$-dimensional L\^{e} number of $g$ at $p=(p_0,\cdots,p_n)$ with respect to $\z$}, denoted $\lambda_{g,\z}^k(p)$, is equal to the intersection number
\[
\left ( \Lambda_{g,\z}^k \cdot V(z_0-p_0,\cdots,z_{k-1}-p_{k-1}) \right )_p,
\]
provided this intersection is purely zero-dimensional at $p$.
\end{defin}


\begin{exm}
When $g$ has an isolated critical point at the origin, the only non-zero L\^{e} number of $g$ is $\lambda_{g,\z}^0(\0)$. In this case, we have:
\begin{align*}
\lambda_{g,\z}^0(\0) &= \left (\Lambda_{g,\z}^0 \cdot \U \right )_\0 \\
&= V\left (\frac{\pd g}{\pd z_0},\cdots,\frac{\pd g}{\pd z_n} \right )_\0,
\end{align*}
i.e., the 0-dimensonal L\^{e} number of $g$ is just the multiplicity of the Jacobian scheme. In the case of an isolated critical point, this is the Milnor number of $g$ at $\0$. 
\end{exm}


\begin{exm}
Suppose now that $\dim_\0 \Sigma g = 1$. Then, the only non-zero L\^{e} numbers of $g$ are $\lambda_{g,\z}^0(\0)$ and $\lambda_{g,\z}^1(p)$ for $p \in \Sigma g.$ 

At $\0$, we have 
\begin{align*}
\lambda_{g,\z}^1(\0) &= \left (\Lambda_{g,\z}^1 \cdot V(z_0) \right)_\0 \\
&= \left (V\left (\frac{\pd g}{\pd z_1},\cdots,\frac{\pd g}{\pd z_n} \right) \cdot V(z_0) \right)_\0 \\
&= 	\sum_{q \in B_\epsilon \cap V(z_0-q_0)\cap \Sigma g} \left (V\left (\frac{\pd g}{\pd z_1},\cdots,\frac{\pd g}{\pd z_n} \right) \cdot V(z_0-q_0) \right )_q \\
&= \sum_{q \in B_\epsilon \cap V(z_0-q_0)\cap \Sigma g} \mu_q \left (g_{|_{V(z_0-q_0)}} \right) 
\end{align*}
where the second to last line is the dynamic intersection property for proper intersections. 


After rearranging the terms in the last line, we find
\begin{align*}
\lambda_{g,\z}^1(\0) = \sum_{C \subseteq \Sigma g \text{  irred. comp.}} \overset{\circ}{\mu}_C\left (C \cdot V(z_0) \right )_\0,
\end{align*}
where the sum is indexed over the collection of irreducible components of $\Sigma g$ at the origin, and $\overset{\circ}{\mu}_C$ denotes the generic transversal Milnor number of $g$ along $C$.
\end{exm}

\bigskip

\section{Appendix: Singularities of Maps}\label{sec:singmaps}
Our primary references for this appendix are \cite{mather}, \cite{GAFFNEY1993185}, and \cite{PMIHES_1968__35__127_0}.

Let $f : (\C^n,S) \to (\C^p,\0)$ be a holomorphic map (multi-)germ, with $S$ a finite subset of $\C^n$.  Then, the group of biholomorphisms $\Diff(N,S)$ from $\C^n$ to $\C^n$ (preserving $S$), acts on $f$ on the left by pre-composition; similarly, the group of biholomorphisms $\Diff(\C^p,\0)$ from $\C^p$ to to $\C^p$ (preserving the origin), acts on $f$ on the right by composition (we realize the notation $\Diff$ to denote biholomorphisms seems confusing, but this appears to be standard notation). Thus, we have a group action of $\mathcal{A} := \Diff(\C^n,S) \times \Diff(\C^p,\0)$ on the space of all holomorphic maps $\mathcal{O}(n,p)$ from $(\C^n,S)$ to $(\C^p,\0)$:
\begin{align*}
\mathcal{A} \times \mathcal{O}(n,p) &\to \mathcal{O}(n,p) \\
(\Phi,\Psi)*f &= \Phi \circ f \circ \Psi^{-1}.
\end{align*}
Clearly, this group action defines an equivalence relation on $\mathcal{O}(n,p)$, where $f \thicksim g$ if there exists $(\Phi,\Psi) \in \mathcal{A}$ for which $\Phi^{-1} \circ f \circ \Psi = g$. Let $\mathcal{A}_e$ denote the pseudo-group gotten by allowing non-origin preserving equivalences, and $\mathcal{O}_e(n,p)$ the space of map-germs at the origin, but not necessarily origin-preserving.

\begin{defin}\label{def:generalunfolding}
A \textbf{$d$-parameter unfolding} of $f$ is a map germ 
$$
F : (\C^d \times \C^n,\{\0\} \times S) \to (\C^d \times \C^p,\0)
$$
of the form 
$$
F(\t,\z) = (\t,\wt f(\t,\z)),
$$
such that $\wt f(\0,\z) = f(\z)$, and $\t = (t_1,\cdots,t_d)$ are coordinates on $\C^d$. We also write $f_\t(\z) := \wt f(\t,\z)$, so that $f_0 = f$.

\smallskip

We say $F$ is a \textbf{trivial unfolding} of $f$ if there are $d$-parameter unfoldings of the identity on $\C^n$ and $\C^p$, say $\Phi$ and $\Psi$, respectively, such that $\Phi \circ F \circ \Psi^{-1} = (id,f)$. 
\end{defin}

\begin{defin}\label{def:stablemap}
We say $f \in \mathcal{O}_e(n,p)$ is \textbf{stable} if every unfolding of $f$ is trivial.
\end{defin}

\begin{defin}\label{def:stableunfolding}
We say an unfolding $F : (\C^d \times \C^n,\{0\} \times S) \to (\C^d \times \C^p,\0)$, $F(\t,\z) = (\t,f_\t(\z))$ of $f$ is a \textbf{stable unfolding} (or, a \textbf{stabilization}) of $f$ if $f_\t$ is stable for all $t \neq 0$.
\end{defin}

\begin{defin}\label{def:finitelydetermined}
We say that a map $f \in \mathcal{O}(n,p)$ is \textbf{finitely determined} if there exists an integer $k$ such that any $g \in \mathcal{O}(n,p)$ which has the same $k$-jet as $f$ satisfies $f \thicksim g$. That is, if, for all $x \in S$, the derivatives of $f$ and $g$ at $x$ of order $\leq k$ are the same (with respect to a system of coordinates at $x$ and $y$).
\end{defin}

We primarily care about (one-parameter) stabilizations of finitely-determined map germs for the fact that these maps all have isolated instabilities at the origin (see \secref{sec:unfold}). In general, we have the following remark.

\begin{rem}
Suppose, that $F$ is a stable one-parameter unfolding of a finite map $f$, and that $h :(\im F,\0) \to (\C,0)$ is the projection onto the unfolding parameter. Then a point $x \in V(h)$ is a point in the image of $f$. If $f$ is stable at $x$, then $h$ is locally a topologically trivial fibration in a neighborhood of $x$; consequently, the Milnor fiber is contractible, and $x \notin \Sigma_{\Top} h$.
Thus, $\Sigma_{\Top} h$ is contained in the unstable locus of $F_0$. We will need this observation in \secref{sec:unfold}.
\end{rem}

\printbibliography
\end{document}